\newlength{\defbaselineskip}
\newcommand{\setlinespacing}[1]%
           {\setlength{\baselineskip}{#1 \defbaselineskip}}
\newcommand{\N}{{\mathbb{N}}}
\newcommand{\actaqed}{\hfill $\actabox$}
{\medskip\noindent \textit{Proof of #1. }}%
{\actaqed \medskip}
\def \x{{\mathbf x}}
\def \w{{\mathbf w}}
\def \r{{\mathbf r}}
\def \u{{\mathbf u}}
\def\D{{\mathcal D}}
\def\R{{\mathbb R}}
\def \<{\langle}
\def\>{\rangle}
\def \e{\epsilon}
\def \wh{\widehat}
\def \de{\delta}
\def \dt{\delta}
\def \ff{\varphi}
\def\N{{\mathbb N}}
\def\Z{{\mathbb Z}}
\def \supp{\operatorname{supp}}
\def \sp{\operatorname{span}}
\def \exp{\operatorname{exp}}
\def\la{\lambda}
\def\Gm{\Gamma}
\newtheorem{Theorem}{Theorem}[section]
\newtheorem{Lemma}{Lemma}[section]
\newtheorem{Definition}{Definition}[section]
\newtheorem{Proposition}{Proposition}[section]
\newtheorem{Remark}{Remark}[section]
\newtheorem{Corollary}{Corollary}[section]
\numberwithin{equation}{section}
\begin{document}
\title{{Sparse approximation and recovery by greedy algorithms}\thanks{\it Math Subject Classifications.
primary:  41A65; secondary: 41A25, 41A46, 46B20.}}
\author{E. Livshitz\thanks{Evernote Corp. and Moscow State University. Research was supported in part by the Russian Foundation for Basic
Research (grants
11-01-00476 and 13-01-00554)} and V. Temlyakov \thanks{ University of South Carolina and Steklov Institute of Mathematics. Research was supported by NSF grant DMS-1160841 }} \maketitle
\begin{abstract}
{We study sparse approximation by greedy algorithms. Our contribution is two-fold. First, we prove exact recovery with high probability of random $K$-sparse signals within $\lceil K(1+\e)\rceil$ iterations of the Orthogonal Matching Pursuit (OMP). This result shows that in a probabilistic sense the OMP is almost optimal for exact recovery.  Second, we prove the Lebesgue-type inequalities for the Weak Chebyshev Greedy Algorithm,  a generalization of the Weak Orthogonal Matching Pursuit to the case of a Banach space. The main novelty of these results is a Banach space setting instead of a Hilbert space setting.
However, even in the case of a Hilbert space our results add some new elements to known results on the Lebesque-type inequalities for the RIP dictionaries. Our technique is a development of the recent technique created by Zhang.
 }
\end{abstract}

{\it Key words:} Greedy Algorithms, Orthogonal Matching Pursuit, Sparse Approximation, Lebesgue-type inequality, Probability.

\section{Introduction}

This paper deals with sparse approximation. Driven by applications in biology, medicine, and engineering approximation problems are formulated in very high dimensions, which bring to the fore new phenomena.  One aspect of the high-dimensional context is a focus on  sparse signals (functions). The main motivation for the study of sparse signals is that many real world signals can be well approximated by sparse ones. A very important step in solving multivariate problems with large dimension
occurred  during last 20 years. Researchers began to use sparse
representations as a way to model the corresponding function classes. This approach
automatically implies a need for nonlinear approximation, in particular, for greedy approximation. We give a brief description of a sparse approximation problem. In a general setting we are working in a Banach space $X$ with a redundant system of elements $\D$ (dictionary $\D$). There is a solid justification of importance of a Banach space setting in numerical analysis in general and in sparse approximation in particular (see, for instance, \cite{Tbook}, Preface, and \cite{ST}).
An element (function, signal) $f\in X$ is said to be $K$-sparse with respect to $\D$ if
it has a representation $f=\sum_{i=1}^Kx_ig_i$,   $g_i\in \D$, $i=1,\dots,K$. The set of all $K$-sparse elements is denoted by $\Sigma_K(\D)$. For a given element $f_0$ we introduce the error of best $m$-term approximation
$$
\sigma_m(f_0,\D) := \inf_{f\in\Sigma_m(\D)} \|f_0-f\|.
$$
Here are two fundamental problems of sparse approximation.

{\bf P1. Exact recovery.} Suppose we know that $f_0\in \Sigma_K(\D)$. How can we recover it?

{\bf P2. Approximate recovery.}   How to design a practical algorithm that builds $m$-term approximations comparable to
best $m$-term approximations?

It is known that in both of the above problems greedy-type algorithms play a fundamental role. We discuss one of them
here. There are two special cases of the above general setting of the sparse approximation problem.

(I). Instead of a Banach space $X$ we consider a Hilbert space $H$. Approximation is still with respect to a redundant dictionary $\D$.

(II). We approximate in a Banach space $X$ with respect to a basis $\Psi$ instead of a redundant dictionary $\D$.

This section discusses setting (I) and the corresponding generalizations to the Banach space setting.   Section 4 addresses setting (II). We begin our discussion with the Orthogonal Greedy Algorithm (OGA) in a Hilbert space. The Orthogonal Greedy Algorithm is called the Orthogonal Matching Pursuit (OMP) in signal processing. We will use the name Orthogonal Matching Pursuit for this algorithm in this paper. It is natural to compare performance of the OMP with the best $m$-term approximation with regard to a dictionary $\D$.  We  recall some notations and
definitions from the theory of greedy algorithms. Let $H$ be a
real Hilbert space with an inner product
$\langle\cdot,\cdot\rangle $ and the norm $\|x\|:=\langle x,x
\rangle^{1/2}$. We say a set $\D$ of functions (elements) from $H$
is a dictionary if each $g\in \D$ has a unit norm $(\|g\|=1)$ and the closure of
$\sp \D$ is $H.$   Let a sequence $\tau = \{t_k\}_{k=1}^\infty$,
$0\le t_k \le 1$, be given. The following greedy algorithm was
defined in \cite{T1} under the name Weak Orthogonal Greedy Algorithm (WOGA).

 {\bf Weak Orthogonal Matching Pursuit (WOMP).} Let $f_0$ be given. Then for each $m\ge 1$ we inductively define:

(1) $\varphi_m  \in \D$ is any element satisfying
$$
|\langle f_{m-1},\varphi_m\rangle | \ge t_m
\sup_{g\in \D} |\langle f_{m-1},g\rangle |.
$$

(2) Let $H_m := \sp (\varphi_1,\dots,\varphi_m)$ and let
$P_{H_m}(\cdot)$ denote an operator of orthogonal projection onto $H_m$.
Define
$$
G_m(f_0,\D) := P_{H_m}(f_0).
$$

(3) Define the residual after $m$th iteration of the algorithm
$$
f_m := f_0-G_m(f_0,\D).
$$

In the case $t_k=1$, $k=1,2,\dots$,   WOMP is called the Orthogonal
Matching Pursuit (OMP). In this paper we only consider the case $t_k=t$, $k=1,2,\dots$, $t\in(0,1]$.

The theory of the WOMP is well developed (see \cite{Tbook}). In first results on performance of the WOMP in problems {\bf P1} and {\bf P2}
researchers imposed the incoherence assumption on a dictionary $\D$. The reader can find detailed discussion of these results in \cite{Tbook}, Section 2.6 and \cite{L}. Recently,  exact recovery results and Lebesgue-type inequalities for the WOMP under assumption that $\D$ satisfies Restricted Isometry Property (RIP) introduced in compressed sensing theory (see Definition \ref{RIP-def} below) have been proved (see \cite{Z}, \cite{F}, \cite{WS}).
A breakthrough result in this direction was obtained by Zhang \cite{Z}. In particular, he proved that if $\dt^{{RIP}}_{31K}(\D)<1/3$ then the OMP recovers exactly all $K$-sparse signals within  $30K$ iterations. In other words, $f_{30K}=0$. It is interesting and difficult problem to improve the constant $30$. There are several papers devoted to this problem (see \cite{F} and \cite{WS}).
In this paper we develop Zhang's technique in two directions: (1) to obtain exact recovery with high probability of random $K$-sparse signals
 %under RIP assumptions with parameter $K(1+\epsilon)$
 within $\lceil K(1+\e)\rceil$ iterations of the OMP
 and (2) to obtain recovery results and the Lebesgue-type inequalities in the Banach space setting.

In Section 2 we prove exact recovery results under RIP conditions on a dictionary combined with assumptions on the sparse signal to be recovered (see Theorem \ref{T2.1}). We prove that the corresponding assumptions on a sparse signal are satisfied with high probability if it is a random signal. In particular, we prove the following theorem.
\begin{Theorem}\label{T1.1} For any $\e>0$ there exist $\de=\de(\e)>0$ and $K_0 = K_0(\e)$ such that for any dictionary $\D$, $\dt^{{RIP}}_{2K}(\D)<\dt$, $K\ge K_0$, the following statement holds. Let $f_0\in\Sigma_K(\D)$ and its nonzero coefficients are uniformly distributed on $[-1,1]$ independent random variables. Then $f_{\lceil K(1+\e)\rceil}=0$ with  probability greater than $1-\exp(-C(\e)K)$.
\end{Theorem}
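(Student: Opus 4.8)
The plan is to deduce Theorem~\ref{T1.1} from the deterministic recovery result Theorem~\ref{T2.1}, whose r\^ole is to guarantee, under $\dt^{{RIP}}_{2K}(\D)<\dt$, exact recovery of $f_0\in\Sigma_K(\D)$ within $\lceil K(1+\e)\rceil$ iterations of the OMP provided the coefficient vector $x=(x_1,\dots,x_K)$ of $f_0$ obeys a suitable regularity condition $(\star)$ controlling the distribution of the magnitudes $|x_i|$. The entire burden of Theorem~\ref{T1.1} then becomes probabilistic: to verify that a vector with independent entries uniform on $[-1,1]$ satisfies $(\star)$ with probability at least $1-\exp(-C(\e)K)$. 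First I would recall why such a condition governs the iteration count. Under a small RIP constant, the correlations of the current residual $f_{m-1}$ with the still-unchosen correct atoms are, up to a factor $1\pm\dt$, the remaining coefficients, while the correlations with atoms outside the support of $f_0$ are only $O(\dt\|f_{m-1}\|)$; since the residual is orthogonal to all previously chosen atoms, the OMP never repeats an atom and therefore reaches $f_m=0$ after exactly $K+w$ iterations, where $w$ is the number of \emph{wrong} selections. Intuitively a wrong atom can be selected only at a stage where the remaining true coefficients are insufficiently spread (their maximum comparable to their $\ell_2$-norm), so $(\star)$ should quantify this spread in a way that forces $w\le\e K$, which is precisely the gap between $K$ and $\lceil K(1+\e)\rceil$.

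The probabilistic core is then a concentration estimate for the magnitude distribution of $x$. For a threshold $\eta\in(0,1)$ the events $\{|x_i|<\eta\}$ are independent of probability $\eta$, so $N(\eta):=\#\{i:|x_i|<\eta\}$ is a sum of i.i.d. Bernoulli$(\eta)$ variables; the Chernoff bound gives that $N(\eta)\le 2\eta K$ fails with probability at most $\exp(-c\eta K)$, while $\max_i|x_i|\ge 1/2$ fails with probability $2^{-K}$. Taking $\eta$ of order $\e$ shows that, outside an event of probability $\exp(-c(\e)K)$, at most $2\e K$ coefficients fall below level $\e$, and applying the same bound simultaneously over the $O(\log(1/\e))$ thresholds $\eta_j=2^{-j}$ bounded below by a constant multiple of $\e$ costs only a constant factor in the union bound. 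These estimates pin down the decay of the order statistics $|x^*_1|\ge\cdots\ge|x^*_K|$, which for the uniform model behaves like $|x^*_k|\asymp 1-k/K$, so that every band $[2^{-j-1},2^{-j}]$ above level $\e$ carries a controlled, non-overloaded count of coefficients. This is exactly the spread required to keep the successive residual vectors away from the flat regime; calibrating $\dt=\dt(\e)$ small against the constant in the flatness threshold then makes the hypothesis of Theorem~\ref{T2.1} hold on this high-probability event.

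The step I expect to be the main obstacle is the interface between the deterministic and probabilistic halves, for two reasons. The condition $(\star)$ is genuinely more delicate than naive flatness: a crude per-stage flatness bound permits wrong picks whenever many coefficients remain, so the count $w$ must instead be controlled in an amortized fashion, charging wrong selections against the cumulative $\ell_2$-progress of the algorithm, and this cumulative control must be shown to hold uniformly along the \emph{entire} trajectory, i.e. for every suffix of the sorted coefficient sequence at once, even as earlier wrong selections feed extra atoms into the projection and perturb the residual's representation. Moreover one must track the smallest true coefficients, which for the uniform model are only of order $1/K$, against the $O(\dt\|f_{m-1}\|)$ spillover from the off-support atoms, where the RIP margin is tightest and the admissible constants $\dt(\e)$ and $C(\e)$ are really determined. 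Handling these endpoints, together with absorbing the constant and logarithmic union-bound losses into the exponent $C(\e)K$ for $K\ge K_0(\e)$, is where the care is needed; the remaining work is routine concentration and RIP manipulation.
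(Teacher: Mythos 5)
Your overall architecture coincides with the paper's: cite the deterministic Theorem \ref{T2.1} and verify its hypothesis with high probability via Bernoulli concentration on the counts $|\{i:|x_i|<\eta\}|$ --- your Chernoff step is exactly the paper's Lemma \ref{lm-prob}, proved there by Hoeffding's inequality. The genuine gap is that you never close the loop between the two halves. Theorem \ref{T2.1} involves no unspecified regularity condition $(\star)$: its conclusion is recovery within $K+6N(\x,\wh C\dt^{1/2}K)$ iterations, where by (\ref{N-def}) the quantity $N(\x,\nu)$ is simply the largest cardinality of a set $\Lambda\subset T$ with $\|\x_\Lambda\|^2\le\nu$. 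So after the concentration step all that remains is a short counting argument: put $\varkappa:=\wh C\dt^{1/2}$; on the event $|\{i:|x_i|<\varkappa^{1/3}\}|\le 2\varkappa^{1/3}K$, every $\Lambda\subset T$ with $\|\x_\Lambda\|^2\le\varkappa K$ satisfies
$$
|\Lambda|\le |\{i\in\Lambda:|x_i|<\varkappa^{1/3}\}|+\varkappa K\,\varkappa^{-2/3}\le 3\varkappa^{1/3}K,
$$
hence $N(\x,\varkappa K)\le 3(\wh C)^{1/3}\dt^{1/6}K$, and one finishes by choosing $\dt=\dt(\e)<0.001$ so that $18(\wh C)^{1/3}\dt^{1/6}\le\e$. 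You possess all the raw material (the threshold counts), but you never split a candidate set $\Lambda$ into small and large coefficients, never bound $N$, and never calibrate $\dt(\e)$; instead you leave this step as the undetermined condition $(\star)$ and declare the interface to be the ``main obstacle.''

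Moreover, your description of that obstacle is misdirected, which is why this is a gap and not merely terseness. The amortized, trajectory-wide control of wrong selections that you say must still be established (charging wrong picks against cumulative $\ell_2$ progress, uniformity over every suffix of the sorted coefficients, spillover from previously chosen wrong atoms) is precisely the content of the \emph{proof} of Theorem \ref{T2.1} (Lemma \ref{lm-main}, the bound $\|\r^K\|^2\le C_2\dt^{1/2}K$, and the Wang--Shim refinement of Zhang's theorem); none of it needs to be redone in order to deduce Theorem \ref{T1.1}, and if you did intend to redo it, your sketch would not suffice. In particular, the heuristic that a wrong atom can be selected only when the remaining true coefficients have ``maximum comparable to their $\ell_2$-norm'' is backwards: a wrong selection is possible only in the flat regime, when the largest remaining coefficient is of order $\dt\|\x_{\Gm^m}\|$, i.e.\ much smaller than the $\ell_2$-norm; when the maximum is comparable to the $\ell_2$-norm, the correct atom always wins. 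Finally, the union bound over the dyadic thresholds $\eta_j=2^{-j}$ and the order-statistics asymptotics are unnecessary refinements: a single threshold at level $\varkappa^{1/3}$ does the whole job.
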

This theorem shows that in a probabilistic sense the OMP is almost optimal for exact recovery.

Sections 3 is devoted to the Banach space setting.
Let $X$ be a Banach space with norm $\|\cdot\|:=\|\cdot\|_X$. As in the case of Hilbert spaces we say that a set of elements (functions) $\D$ from $X$ is a dictionary  if each $g\in \D$ has norm   one ($\|g\|=1$),
and the closure of $\sp \D$ is $X$.
For a nonzero element $g\in X$ we let $F_g$ denote a norming (peak) functional for $g$:
$$
\|F_g\|_{X^*} =1,\qquad F_g(g) =\|g\|_X.
$$
The existence of such a functional is guaranteed by the Hahn-Banach theorem.

Let
$\tau := \{t_k\}_{k=1}^\infty$ be a given weakness sequence of  nonnegative numbers $t_k \le 1$, $k=1,\dots$. We define the Weak Chebyshev Greedy Algorithm (WCGA) (see \cite{T15}) as a generalization for Banach spaces of the Weak Orthogonal Matching Pursuit.  We study in detail the WCGA in this paper.

 {\bf Weak Chebyshev Greedy Algorithm (WCGA).}
Let $f_0$ be given. Then for each $m\ge 1$ we have the following inductive definition.

(1) $\varphi_m :=\varphi^{c,\tau}_m \in \D$ is any element satisfying
$$
|F_{f_{m-1}}(\varphi_m)| \ge t_m\sup_{g\in\D}  | F_{f_{m-1}}(g)|.
$$

(2) Define
$$
\Phi_m := \Phi^\tau_m := \sp \{\varphi_j\}_{j=1}^m,
$$
and define $G_m := G_m^{c,\tau}$ to be the best approximant to $f_0$ from $\Phi_m$.

(3) Let
$$
f_m := f^{c,\tau}_m := f_0-G_m.
$$

In Section 3 we prove the Lebesgue-type inequalities for the WCGA. A very important advantage of the WCGA  is its convergence and rate of convergence properties. The WCGA is well defined for all $m$. Moreover, it is known (see \cite{T15} and \cite{Tbook}) that the WCGA with $\tau=\{t\}$ converges for all $f_0$ in all uniformly smooth Banach spaces with respect to any dictionary. That is, when $X$ is a real Banach space and the modulus of smoothness of $X$ is defined as follows
\begin{equation*}
\rho(u):=\frac{1}{2}\sup_{x,y;\|x\|= \|y\|=1}\left|\|x+uy\|+\|x-uy\|-2\right|,
\end{equation*}
then the uniformly smooth Banach space is the one with $\rho(u)/u\to 0$ when $u\to 0$.

For notational convenience we consider here a countable dictionary $\D=\{g_i\}_{i=1}^\infty$. For a given $f_0$, let the sparse element (signal)
 $$
 f:=f^\e=\sum_{i\in T}x_ig_i
 $$
 be such that $\|f_0-f^\e\|\le \e$ and $|T|=K$. For $A\subset T$ denote
 $$
 f_A:=f_A^\e := \sum_{i\in A}x_ig_i.
 $$

  We use the following two assumptions.

 {\bf A1. Nikol'skii-type inequality.} The sparse element $f=\sum_{i\in T}x_ig_i$ satisfies Nikol'skii-type $\ell_1X$ inequality with parameter $r$ if
 \begin{equation*}
 \sum_{i\in A} |x_i| \le C_1|A|^{r}\|f_A\|,\quad A\subset T,\quad r\ge 1/2.
 \end{equation*}

{\bf A2. Incoherence property.}  The sparse element $f=\sum_{i\in T}x_ig_i$ has incoherence property with parameters $D$ and $U$ if for any $A\subset T$ and any $\Lambda$, such that $A\cap \Lambda =\emptyset$ and $|A|+|\Lambda| \le D$, we have for any $\{c_i\}$
\begin{equation*}
\|f_A-\sum_{i\in\Lambda}c_ig_i\|\ge U^{-1}\|f_A\|.
\end{equation*}

 The main result of Section 3 is the following.
 \begin{Theorem}\label{T1.5} Let $X$ be a Banach space with $\rho(u)\le \gamma u^2$. Suppose $K$-sparse $f^\e$ satisfies {\bf A1}, {\bf A2} and $\|f_0-f^\e\|\le \e$. Then the WCGA with weakness parameter $t$ applied to $f_0$ provides
$$
\|f_{C(t,\gamma,C_1)U^2\ln (U+1) K^{2r}}\| \le C\e\quad\text{for}\quad K+C(t,\gamma,C_1)U^2\ln (U+1) K^{2r}\le D
$$
with an absolute constant $C$.
\end{Theorem}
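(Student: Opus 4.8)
The plan is to combine the smoothness‑driven geometric decay of $\|f_m\|$ with a lower bound for the size of the greedy functional extracted from {\bf A1} and {\bf A2}. Write $S_{m-1}:=\sup_{g\in\D}|F_{f_{m-1}}(g)|$. The first step is the standard one‑step estimate in a smooth space: since $G_m$ is the best approximant from $\Phi_m$ and $G_{m-1}+\lambda\varphi_m\in\Phi_m$, we have $\|f_m\|\le\|f_{m-1}-\lambda\varphi_m\|$ for every $\lambda$, and the inequality $\|x-y\|\le\|x\|-F_x(y)+2\|x\|\rho(\|y\|/\|x\|)$ together with $\rho(u)\le\gamma u^2$ and the weak‑greedy choice $|F_{f_{m-1}}(\varphi_m)|\ge tS_{m-1}$ gives, after optimizing in $\lambda$,
\begin{equation*}
\|f_m\|\le\|f_{m-1}\|\Bigl(1-\frac{t^2S_{m-1}^2}{8\gamma}\Bigr).
\end{equation*}
Thus everything reduces to a good lower bound for $S_{m-1}$.

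For that bound I would exploit that $F_{f_{m-1}}$ annihilates $\Phi_{m-1}=\sp\{g_k:k\in\Lambda_{m-1}\}$, where $\Lambda_{m-1}$ is the set of chosen indices (it grows strictly, since a chosen atom has zero residual functional and is never reselected). Put $A:=T\setminus\Lambda_{m-1}$ and split $f=f_A+f_{T\setminus A}$ with $f_{T\setminus A}\in\Phi_{m-1}$. Since $f_{m-1}=f_0-G_{m-1}$ and $\|f_0-f\|\le\e$, applying $F_{f_{m-1}}$ and using $F_{f_{m-1}}(\Phi_{m-1})=0$ yields $F_{f_{m-1}}(f_A)=\|f_{m-1}\|-F_{f_{m-1}}(f_0-f)\ge\|f_{m-1}\|-\e$. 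On the other hand {\bf A2}, applied with this $A$ and $\Lambda=\Lambda_{m-1}$ to $f_{m-1}-(f_0-f)=f_A+(f_{T\setminus A}-G_{m-1})$ (legitimate while $|A|+|\Lambda_{m-1}|\le D$), gives $U^{-1}\|f_A\|\le\|f_{m-1}\|+\e$. Combining $F_{f_{m-1}}(f_A)=\sum_{i\in A}x_iF_{f_{m-1}}(g_i)\le\bigl(\sum_{i\in A}|x_i|\bigr)S_{m-1}$ with {\bf A1} produces, while $\|f_{m-1}\|\ge2\e$,
\begin{equation*}
S_{m-1}\ge\frac{\|f_{m-1}\|-\e}{C_1|A|^{r}\,\|f_A\|}\ge\frac{1}{3C_1U|A|^{r}},
\end{equation*}
so that $\|f_m\|\le\|f_{m-1}\|\bigl(1-c_1U^{-2}|A|^{-2r}\bigr)$ with $c_1=c_1(t,\gamma,C_1)$ and $|A|\le K$.

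The decisive additional ingredient is a matching floor bound: applying {\bf A2} to $f_m-(f_0-f)=f_{A_m}+(f_{T\setminus A_m}-G_m)$ with $A_m:=T\setminus\Lambda_m$ shows $\|f_m\|\ge U^{-1}\|f_{A_m}\|-\e$. In particular, once $T\subseteq\Lambda_m$ we have $f\in\Phi_m$ and hence $\|f_m\|\le\|f_0-f\|\le\e$; so it suffices to bound the number of iterations needed to \emph{capture} all of $T$, and the floor bound is exactly what makes this count independent of $\e$. Indeed, the residual cannot be driven below $U^{-1}\|f_{A_m}\|$, so the geometric decay above must be ``spent'' on lowering $\|f_{A_m}\|$, i.e.\ on selecting indices of $T$. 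I would organize this in phases: with $|A|\le K$ the one‑step estimate forces $\|f_m\|$ to fall by a factor $\sim U$ within $O\bigl(U^2K^{2r}\ln(U+1)\bigr)$ steps, and the floor bound then shows such a fall is impossible unless a new index of $T$ has meanwhile entered $\Lambda_m$. Tracking the nonincreasing quantity $|A_m|$ (which starts at $\le K$) and telescoping over the phases yields capture of $T$, and hence $\|f_m\|\le C\e$, within $C(t,\gamma,C_1)U^2\ln(U+1)K^{2r}$ iterations; the budget $K+N\le D$ is precisely what is required to invoke {\bf A2} at every step.

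I expect the genuinely delicate point to be this final counting. The constants coming from the smoothness power $\rho(u)\le\gamma u^2$ and from the {\bf A1} exponent $r$ enter only through the clean one‑step estimate, so the crux is purely the phase/capture accounting: one must show that the total number of ``wasted'' selections (indices outside $T$), summed over all captures, contributes only the factor $\ln(U+1)$ rather than $\ln(\|f_0\|/\e)$ or an extra power of $K$. The hard part is arranging the interplay between the upper decay estimate and the floor bound so that the logarithms telescope to $\ln(U+1)$ while the active‑set exponent remains $K^{2r}$ rather than $K^{2r+1}$.
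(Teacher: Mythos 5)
Your individual ingredients are sound and in fact coincide with the paper's own one-step machinery: the smoothness estimate after optimizing in $\la$, the lower bound $S_{m-1}\ge (\|f_{m-1}\|-\e)/(C_1|A|^{r}\|f_{A}\|)$ from {\bf A1}, the bound $\|f_{A}\|\le U(\|f_{m-1}\|+\e)$ from {\bf A2}, and the floor bound $\|f_m\|\ge U^{-1}\|f_{A_m}\|-\e$ all appear there (they constitute the proof of Theorem \ref{T3.1} and the opening of the proof of Theorem \ref{T3.2}). The genuine gap is exactly the point you flag at the end, and it is fatal to the proposal as written. In your accounting the set $A$ is always the whole uncaptured part of $T$, so one phase costs $\sim c_1^{-1}U^2|A|^{2r}\ln(U+1)$ iterations with $|A|$ as large as $K$, and the floor argument guarantees only that \emph{one} new index of $T$ is captured per phase: nothing forces more. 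In the worst case you need $\sim K$ captures, so the total count is $\sum_{j\le K}U^2\ln(U+1)\,j^{2r}\sim U^2\ln(U+1)K^{2r+1}$, i.e.\ precisely the extra factor of $K$ you admit you cannot remove. No step in the proposal supplies a mechanism for removing it, so the claimed bound $C(t,\gamma,C_1)U^2\ln(U+1)K^{2r}$ is asserted rather than proved.

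The paper's mechanism is a multiscale decomposition of the uncaptured set $\Gamma^k=T\setminus T^k$ that your plan lacks. For each dyadic scale $j$ one picks $B_j\subset\Gamma^k$ with $|B_j|\ge|\Gamma^k|-2^{j-1}$ and $\|f_{B_j}\|$ \emph{minimal} among such sets, and runs the one-step estimate with the small set $A_j=\Gamma^k\setminus B_j$, $|A_j|\le 2^{j-1}$, at cost $\beta|A_j|^{2r}\le\beta 2^{2r(j-1)}$ iterations per phase; the price is an extra error term $\|f_{B_j}\|$ in the recursion. A stopping rule selects the first scale $L$ at which the geometric decrease $\|f_{B_{j-1}}\|<b\|f_{B_j}\|$ fails, which keeps the accumulated error terms dominated by $\|f_{B_{L-1}}\|$ and yields $\|f_{m_L}\|\le 8\|f_{B_{L-1}}\|e^{-c_1\beta}+4\e$ after only $\lesssim\beta 2^{2rL}$ iterations. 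With $\beta\sim c_1^{-1}\ln(U+1)$, the floor bound then gives $\|f_{\Gamma^{m_L}}\|<\|f_{B_{L-1}}\|$, and the \emph{minimality} of $\|f_{B_{L-1}}\|$ converts this single norm inequality into a bulk capture of indices: $|\Gamma^{m_L}|<|\Gamma^k|-2^{L-2}$. So each superphase captures $2^{L-2}$ indices at cost $\beta 2^{2rL}$, and the subadditivity $(a_1+\cdots+a_n)^{1/(2r)}\le a_1^{1/(2r)}+\cdots+a_n^{1/(2r)}$ shows all of $T$ is exhausted within $2^{4r+1}\beta K^{2r}$ iterations (Theorem \ref{T3.1} then reduces the resulting $CU\e$ bound to $C\e$). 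This trade -- small active sets $A_j$ against an error $\|f_{B_j}\|$ controlled by the stopping rule, plus minimality to force bulk capture -- is the missing idea; without it, or an equivalent, your one-index-per-phase count cannot reach the exponent $K^{2r}$.
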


  Theorem \ref{T1.5} provides a corollary for Hilbert spaces that gives sufficient conditions somewhat weaker than the known RIP conditions on $\D$ for the Lebesgue-type inequality to hold. We formulate it as a theorem.
 \begin{Theorem}\label{T1.6} Let $X$ be a Hilbert space. Suppose $K$-sparse $f^\e$ satisfies  {\bf A2} and $\|f_0-f^\e\|\le \e$. Then the WOMP with weakness parameter $t$ applied to $f_0$ provides
$$
\|f_{C(t,U) K}\| \le C\e\quad\text{for}\quad K+C(t,U) K\le D
$$
with an absolute constant $C$.
\end{Theorem}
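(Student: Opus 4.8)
The plan is to deduce Theorem \ref{T1.6} from Theorem \ref{T1.5} by checking that, in a Hilbert space, all hypotheses of the latter hold with the right parameters. Three observations drive the reduction. First, a Hilbert space is uniformly smooth with $\rho(u)=\sqrt{1+u^2}-1\le u^2/2$, so Theorem \ref{T1.5} applies with $\gamma=1/2$. Second, the WCGA coincides with the WOMP in a Hilbert space: the norming functional of $x$ is $F_x=\langle\cdot,x\rangle/\|x\|$, so the selection step $|F_{f_{m-1}}(\ff_m)|\ge t\sup_g|F_{f_{m-1}}(g)|$ becomes $|\langle f_{m-1},\ff_m\rangle|\ge t\sup_g|\langle f_{m-1},g\rangle|$, and the best approximant from $\Phi_m$ is exactly the orthogonal projection $P_{H_m}$. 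Third, and this is the only real content, I claim that in a Hilbert space \textbf{A2} (with parameter $U$) forces \textbf{A1} to hold with $r=1/2$ and $C_1=2U$. Granting this, Theorem \ref{T1.5} yields a residual bound after $C(t,1/2,2U)\,U^2\ln(U+1)\,K^{2\cdot(1/2)}=C(t,U)K$ iterations, which is precisely the assertion of Theorem \ref{T1.6}.

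The core step is therefore the implication \textbf{A2} $\Rightarrow$ \textbf{A1} with $r=1/2$. First I would extract from \textbf{A2} the subset-monotonicity estimate
$$\|f_{A'}\|\le U\|f_A\|,\qquad A'\subseteq A\subseteq T,\ |A|\le D,$$
obtained by applying \textbf{A2} with the set $A'$, the complementary set $\Lambda=A\setminus A'$, and the coefficients $c_i=-x_i$, so that $f_{A'}-\sum_{i\in\Lambda}c_ig_i=f_A$. Next, for an arbitrary sign pattern $\eta\in\{\pm1\}^A$ I split $A=A^+\cup A^-$ according to the sign of $\eta_i$ and write $\sum_{i\in A}\eta_ix_ig_i=f_{A^+}-f_{A^-}$; the triangle inequality and the monotonicity estimate give $\|\sum_{i\in A}\eta_ix_ig_i\|\le 2U\|f_A\|$ for every $\eta$. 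Averaging over independent random signs and using $\|g_i\|=1$,
$$\sum_{i\in A}x_i^2=\mathbb{E}_\eta\Big\|\sum_{i\in A}\eta_ix_ig_i\Big\|^2\le 4U^2\|f_A\|^2,$$
and Cauchy--Schwarz then gives $\sum_{i\in A}|x_i|\le|A|^{1/2}\big(\sum_{i\in A}x_i^2\big)^{1/2}\le 2U|A|^{1/2}\|f_A\|$, i.e.\ \textbf{A1} with $r=1/2$ and $C_1=2U$.

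The step I expect to be the main obstacle --- or at least the point most easily gotten wrong --- is the derivation of the $\ell_2$ bound $\sum_{i\in A}x_i^2\le 4U^2\|f_A\|^2$. The naive use of \textbf{A2} with singletons only yields $|x_i|\le U\|f_A\|$ for each $i$, hence $\sum_{i\in A}|x_i|\le U|A|\,\|f_A\|$, which is \textbf{A1} with the useless exponent $r=1$ and would produce $K^2$ rather than $K$ iterations in Theorem \ref{T1.5}. The gain to $r=1/2$ comes precisely from using the full, multi-element form of \textbf{A2} (equivalently the subset-monotonicity estimate for every $A'\subseteq A$) together with the sign-averaging identity, which converts the statement ``every signed restriction of $f_A$ has norm $\le 2U\|f_A\|$'' into a Bessel-type lower Riesz bound for the particular coefficient vector $(x_i)$. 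Once this is in place, the remaining bookkeeping --- folding $C(t,1/2,2U)\,U^2\ln(U+1)$ into a single constant $C(t,U)$ and matching the sparsity constraint $K+C(t,U)K\le D$ --- is immediate from Theorem \ref{T1.5}.
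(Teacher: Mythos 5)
Your proposal is correct and follows essentially the same route as the paper: the paper derives Theorem \ref{T1.6} from Theorem \ref{T1.5} via Remark \ref{R3.2}, which states that {\bf A2} plus cotype $q$ implies {\bf A1} with $r=1-1/q$ and $C_1=2UC_q^{-1}$, and then uses that a Hilbert space has cotype $2$ (so $r=1/2$, $C_1=2U$, $\gamma=1/2$). Your sign-averaging identity $\mathbb{E}_\eta\|\sum_{i\in A}\eta_i x_i g_i\|^2=\sum_{i\in A}x_i^2$ is exactly the cotype-$2$ inequality (with constant $1$) that the paper's Remark invokes, so the two arguments coincide step for step, including the derivation of the signed-sum bound $2U\|f_A\|$ from {\bf A2}.
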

Theorem \ref{T1.6} implies the following corollary.
 \begin{Corollary}\label{C1.0} Let $X$ be a Hilbert space. Suppose any $K$-sparse $f$ satisfies   {\bf A2}.   Then the WOMP with weakness parameter $t$ applied to $f_0$ provides
$$
\|f_{C(t,U) K}\| \le C\sigma_K(f_0,\D)\quad\text{for}\quad K+C(t,U) K\le D
$$
with an absolute constant $C$.
\end{Corollary}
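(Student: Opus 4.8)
The plan is to derive this directly from Theorem \ref{T1.6} by choosing the target sparse signal to be a near-best $K$-term approximation of $f_0$ and then passing to the infimum. The key observation is that the WOMP residuals $f_m$ are determined by $f_0$ and the dictionary $\D$ alone; they do not depend on any auxiliary target element. Likewise, under the hypothesis that \emph{every} $K$-sparse element satisfies {\bf A2} with one fixed pair of parameters $D$ and $U$, the iteration count $C(t,U)K$ furnished by Theorem \ref{T1.6} is a single number independent of the target. Hence $\|f_{C(t,U)K}\|$ is a fixed quantity, and it suffices to bound it by $C\e$ for every $\e$ slightly larger than $\sigma_K(f_0,\D)$.

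First I would fix an arbitrary $\e>\sigma_K(f_0,\D)$. By the definition of the best $K$-term approximation error as an infimum over $\Sigma_K(\D)$, there exists a $K$-sparse element $f^\e=\sum_{i\in T}x_ig_i$, $|T|=K$, with $\|f_0-f^\e\|\le\e$. By hypothesis this particular $f^\e$ satisfies {\bf A2} with the fixed parameters $D,U$, so all hypotheses of Theorem \ref{T1.6} are met, the side condition $K+C(t,U)K\le D$ being assumed. Theorem \ref{T1.6} then yields $\|f_{C(t,U)K}\|\le C\e$.

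Finally, since $\e>\sigma_K(f_0,\D)$ was arbitrary while the left-hand side $\|f_{C(t,U)K}\|$ does not depend on $\e$, I would let $\e\downarrow\sigma_K(f_0,\D)$ to obtain $\|f_{C(t,U)K}\|\le C\sigma_K(f_0,\D)$, which is the claimed Lebesgue-type inequality.

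There is no substantive obstacle here: the statement is a formal consequence of Theorem \ref{T1.6}. The only point requiring a little care is that the infimum defining $\sigma_K(f_0,\D)$ need not be attained, which is precisely why one works with an arbitrary $\e>\sigma_K(f_0,\D)$ and a near-minimizer $f^\e$, then passes to the limit, rather than plugging a minimizer directly into Theorem \ref{T1.6}. One should also note that the uniformity of the constant $C$ and of the iteration count across all admissible choices of $f^\e$ is exactly what the blanket assumption that any $K$-sparse $f$ satisfies {\bf A2} provides.
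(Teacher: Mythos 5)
Your proposal is correct and is exactly the argument the paper intends: the paper simply states that Theorem \ref{T1.6} implies Corollary \ref{C1.0}, and your derivation (pick a near-minimizer $f^\e$ for $\e>\sigma_K(f_0,\D)$, apply Theorem \ref{T1.6} with the uniform parameters $D,U$ supplied by the blanket {\bf A2} hypothesis, then let $\e\downarrow\sigma_K(f_0,\D)$ using that the residual $f_{C(t,U)K}$ is independent of the auxiliary target) is the standard way to make that implication precise. Your attention to the non-attainment of the infimum and to the uniformity of the iteration count is exactly the right care, and nothing more is needed.
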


We show in Sections 3 that the RIP condition with parameters $D$ and $\de$ implies the $(D,D)$ unconditionality with
$U=(1+\de)^{1/2}(1-\de)^{-1/2}$. Therefore, Corollary \ref{C1.0} reads as follows in this case.

 \begin{Corollary}\label{C1.1'} Let $X$ be a Hilbert space. Suppose $\D$ satisfies RIP condition with parameters $D$ and $\de$.   Then the WOMP with weakness parameter $t$ applied to $f_0$ provides
$$
\|f_{C(t,\de) K}\| \le C\sigma_K(f_0,\D)\quad\text{for}\quad K+C(t,\de) K\le D
$$
with an absolute constant $C$.
\end{Corollary}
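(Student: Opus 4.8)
The plan is to deduce Corollary \ref{C1.1'} directly from Corollary \ref{C1.0}: the only thing to check is that the RIP hypothesis supplies the incoherence property \textbf{A2} for \emph{every} $K$-sparse element, with $U$ equal to the advertised function of $\de$. Once this elementary ``RIP $\Rightarrow$ unconditionality'' implication is in hand, the greedy machinery already packaged in Corollary \ref{C1.0} does all the real work, so there is essentially no independent obstacle here.

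First I would record the RIP condition in the form it is used: for every index set $S$ with $|S|\le D$ and every collection of coefficients $\{a_i\}_{i\in S}$,
\begin{equation*}
(1-\de)\sum_{i\in S}a_i^2\le \Big\|\sum_{i\in S}a_ig_i\Big\|^2\le (1+\de)\sum_{i\in S}a_i^2 .
\end{equation*}
Now fix an arbitrary $K$-sparse $f=\sum_{i\in T}x_ig_i$, take $A\subset T$, and let $\Lambda$ be disjoint from $A$ with $|A|+|\Lambda|\le D$. The element $f_A-\sum_{i\in\Lambda}c_ig_i$ is supported on $A\cup\Lambda$, a set of at most $D$ indices, so the lower RIP bound gives
\begin{equation*}
\Big\|f_A-\sum_{i\in\Lambda}c_ig_i\Big\|^2\ge (1-\de)\Big(\sum_{i\in A}x_i^2+\sum_{i\in\Lambda}c_i^2\Big)\ge (1-\de)\sum_{i\in A}x_i^2,
\end{equation*}
while the upper RIP bound applied to $f_A$ (supported on $A$, $|A|\le D$) yields $\|f_A\|^2\le (1+\de)\sum_{i\in A}x_i^2$. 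Combining the two inequalities produces
\begin{equation*}
\Big\|f_A-\sum_{i\in\Lambda}c_ig_i\Big\|\ge \Big(\frac{1-\de}{1+\de}\Big)^{1/2}\|f_A\|=U^{-1}\|f_A\|,\qquad U=(1+\de)^{1/2}(1-\de)^{-1/2}.
\end{equation*}
This is exactly \textbf{A2} with parameters $D$ and $U$, and the argument used nothing about $f$ beyond its support, so it holds for every $K$-sparse $f$.

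Finally, since $U$ is a function of $\de$ alone, the constant $C(t,U)$ appearing in Corollary \ref{C1.0} becomes a function $C(t,\de)$ of $t$ and $\de$ only. Applying Corollary \ref{C1.0} with this value of $U$ then gives $\|f_{C(t,\de)K}\|\le C\sigma_K(f_0,\D)$ under the stated size restriction $K+C(t,\de)K\le D$, which is the claim. The single point worth watching—and the reason there is no genuine difficulty—is that the incoherence estimate is required on the \emph{union} $A\cup\Lambda$, whose cardinality is capped at $D$ rather than $K$; this is precisely why the RIP order $D$ (and not merely $K$) is the correct hypothesis, and it matches the way \textbf{A2} is tested on disjoint supports of total size at most $D$.
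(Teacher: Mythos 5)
Your proof is correct and takes essentially the same route as the paper: the paper likewise deduces Corollary \ref{C1.1'} from Corollary \ref{C1.0} by showing that RIP with parameters $D$ and $\de$ yields $(D,D)$-unconditionality (assumption \textbf{A2}) with $U=(1+\de)^{1/2}(1-\de)^{-1/2}$. The paper packages this implication as Lemma \ref{L3.1}, whose content is exactly your two-sided RIP computation (lower bound on the element supported on $A\cup\Lambda$, upper bound on $f_A$), so the arguments coincide.
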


  We   emphasize that in Theorem \ref{T1.5} we impose our conditions on an individual function $f^\e$. It may happen that the dictionary does not satisfy assumptions of $\ell_1X$ inequality and $(K,D)$-unconditionality (see Section 3) but the given $f_0$ can be approximated by $f^\e$ which does satisfy assumptions {\bf A1} and {\bf A2}. Even in the case of a Hilbert space our approach adds something new to the study based on the RIP. First of all, Theorem \ref{T1.6} shows that it is sufficient to impose assumption {\bf A2} on an individual $f^\e$ in order to obtain exact recovery and the Lebesgue-type inequality results. Second, Corollary \ref{C1.0} shows that the condition {\bf A2}, which is weaker than the RIP condition, is sufficient for exact recovery and the Lebesgue-type inequality results. Third, Corollary \ref{C1.1'} shows that even if we impose our assumptions in terms of RIP we do not need to assume that $\de < \de_0$. In fact, the result works for all $\de<1$ with parameters depending on $\de$.

  \section{Almost optimality of the OMP}

  We prove Theorem \ref{T1.1} in this section. For the readers convenience we use notations which are standard in signal processing.
Let $\D=\{\phi_i\}_{i=1}^N$ be a dictionary in $\R^M$, $M<N$. By $\Phi$ denote an $M\times N$ matrix, consisting of elements of $\D$ ($\phi_i\in \R^M$ is the $i$-th column of $\Phi$). We say that $\x\in \R^N$ is $S$-sparse if $\x$ has at most $S$ nonzero coordinates.

\begin{Definition}\label{RIP-def}
A matrix $\Phi$ satisfies $RIP(S,\dt)$ if the inequality
\begin{equation}\label{RIP-ineq}
(1-\dt)\|\x\|^2 \le \|\Phi \x\|^2\le (1+\dt)\|\x\|^2
 \end{equation}
 holds for all $S$-sparse $\x\in\R^N$.
 The minimum of all constants $\dt$, satisfying (\ref{RIP-ineq}),
 is called the isometric constant $\dt_S(\Phi) = \dt_S(\D) = \dt^{{RIP}}_S(\D)$.
\end{Definition}

In this section we study the OMP and use the ``compressed sensing notation'' for the residual of the OMP. Set
\begin{equation*}
 \r^m := f_m,\ m\ge 0.
\end{equation*}
Consider the set
\begin{equation*}
\Omega=\{1,\ldots,N\}.
\end{equation*}
Since $f_0\in\Sigma_K(\D)$, there exists an  $\x = (x_1,x_2,\ldots, x_N)$, $\supp\x = T$, $T\subset\Omega$, $|T| = K$ such that
\begin{equation*}
\r^0=f_0= \Phi\x.
\end{equation*}
Denote by $T^m$ the set of indices of $\phi_i$ picked by the OMP after $m$ iterations.
According to the definition of the OMP for every $m
\ge 0$ we choose $\x^m\in\R^N$, satisfying the following relations
\begin{equation}\label{Tmdef}
 \supp\x^m \subset T^m,\quad |T^m| = m,\text{ while }\r^m\ne 0,
\end{equation}
\begin{equation*}
G_m(f,\D) = \Phi\x^m,
\end{equation*}
\begin{equation}\label{rmPhi}
\r^m = \Phi\x-
\Phi\x^m.
\end{equation}
Let $N(\x,\nu)$ be the minimal integer such that
\begin{equation}\label{N-def}
\|\x_\Lambda\|^2 >\nu, \text { for all }\Lambda \subset T,\ |\Lambda| \ge N(\x,\nu) + 1.
\end{equation}

\begin{Theorem}\label{T2.1} There exists an absolute constant $\wh C$ such that for any $\dt$, $0<\dt < 0.001$, an integer $K \ge K_0 = K_0(\dt)$, and a dictionary $\D$, $\dt^{{RIP}}_{2K}(\D)<\dt$ the following statement holds.The OMP recovers exactly every $K$-sparse signal $\x$, $\|\x\|_\infty\le 1$, within  $K + 6N(\x, \wh C\dt^{1/2}K)$ iterations, in other words, $\r^{K + 6N(\x, \wh C\dt^{1/2}K)}=0$.
\end{Theorem}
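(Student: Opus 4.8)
The plan is to track, at each iteration $m$, the set of still-missing correct indices $R^m := T\setminus T^m$ together with the residual energy it carries, $E_m := \|\x_{R^m}\|^2$. Since under $RIP(2K,\dt)$ the matrix $\Phi_{T^m}$ has full column rank and $\r^m$ is the residual of the best approximation from $\sp\{\phi_i\}_{i\in T^m}$, exact recovery $\r^m=0$ is equivalent to $R^m=\emptyset$. Because $T^m$ grows by one index per iteration and is non-decreasing, the iterations split into exactly $K$ \emph{productive} steps, on which a new index of $T$ enters $T^m$, and the remaining \emph{wasted} steps, on which the chosen $\phi_i$ has $i\notin T$. Thus it suffices to show the number of wasted steps is at most $6N(\x,\wh C\dt^{1/2}K)$; I would fix the target $m_0 := K + 6N(\x,\wh C\dt^{1/2}K)$ and argue $R^{m_0}=\emptyset$.

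First I would assemble the RIP estimates that drive everything. Writing $\r^m=\Phi(\x-\x^m)$ with $\supp(\x-\x^m)\subset T\cup T^m$, and comparing the optimal projection to the competitor supported on $T\cap T^m$ (whose residual is $\Phi\x_{R^m}$), one gets $\|\r^m\|^2\le(1+\dt)E_m$, while the lower RIP bound gives $\|\r^m\|^2\ge(1-\dt)E_m$; hence the residual energy is sandwiched, $\|\r^m\|^2\asymp E_m$, and in particular $\|\x-\x^m\|\le\sqrt{2}\,\|\x_{R^m}\|$. Next, the near-orthogonality consequences of RIP yield $\langle\phi_i,\r^m\rangle = x_i + \eta_i$ for $i\in R^m$ with $\|\eta\|_{\ell_2(R^m)}\le\dt\|\x-\x^m\|$, and $|\langle\phi_j,\r^m\rangle|\le\dt\|\x-\x^m\|$ for every $j\notin T\cup T^m$. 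Finally, each step decreases the residual by at least the square of the selected correlation, $\|\r^{m-1}\|^2-\|\r^m\|^2\ge\max_i\langle\phi_i,\r^{m-1}\rangle^2$. All of this is valid so long as the wasted steps keep $|T\cup T^m|\le 2K$, which I would verify a posteriori from the $6N$ bound.

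The mechanism forcing progress is a dichotomy at each step. Comparing the estimates above, the largest correct correlation is at least $m_\ast - \dt\|\x-\x^m\|$, where $m_\ast=\max_{i\in R^{m}}|x_i|$, while every incorrect correlation is at most $\dt\|\x-\x^m\|$; hence the step is productive whenever $m_\ast$ exceeds a fixed multiple of $\dt\|\x_{R^m}\|$. A wasted step therefore requires the remaining signal to be \emph{flat}, $\max_{i\in R^m}|x_i|\lesssim\dt\|\x_{R^m}\|$, which in turn forces $|R^m|\gtrsim\dt^{-2}$. The threshold $\nu=\wh C\dt^{1/2}K$ inside $N(\x,\nu)$ is calibrated precisely to the energy level at which this flatness can first occur, so that once $E_m\le\nu$ the definition of $N$ guarantees at most $N$ correct indices remain.

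The crux, and the step I expect to be the main obstacle, is the amortized count of wasted steps. On a maximal block of consecutive wasted steps the set $R^m$, and hence $E_m$, is frozen, so by the sandwich the residual $\|\r^m\|^2$ is trapped in a band $[(1-\dt)E_m,(1+\dt)E_m]$ of width $2\dt E_m$; dividing this width by the guaranteed per-step decrease bounds the length of each block, and summing these bounds over the decreasing values of $E_m$, combined with the fact that at most $N$ indices survive once $E_m\le\nu$, is what must be organized so as to telescope to the clean constant $6N$. This is exactly where Zhang's technique, in its sharpened form, is needed: the naive budgeting against the total energy $\|\r^0\|^2\le(1+\dt)K$ is far too lossy, and one must instead exploit the tight two-sided control $\|\r^m\|^2\asymp E_m$ block by block. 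Once the wasted steps are bounded by $6N$, the productive steps number at most $K$, the active support never exceeds $2K$ so every RIP estimate above is justified, and therefore $R^{m_0}=\emptyset$, that is, $\r^{K+6N(\x,\wh C\dt^{1/2}K)}=0$.
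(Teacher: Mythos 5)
Your reduction to counting wasted steps is fine (each productive step captures one index of $T$, OMP never re-selects, so $K$ productive steps force $\r^m=0$), but the step you yourself call the crux is a genuine gap, and the mechanism you sketch for it cannot deliver the bound. On a wasted step, the only per-step decrease your estimates guarantee is of order $\|\r^m\|^4/\|\x-\x^m\|_1^2\approx E_m/K$, while the band you propose to traverse has width $2\dt E_m$; that allows each block of wasted steps to have length of order $\dt K$, and nothing in the flatness dichotomy ($\max_{i\in R^m}|x_i|\lesssim \dt E_m^{1/2}$, hence $|R^m|\gtrsim \dt^{-2}$) limits the \emph{number} of such blocks or ties their total length to $N(\x,\wh C\dt^{1/2}K)$: if all remaining coefficients are equal and small, flatness holds permanently and your dichotomy permits wasted steps indefinitely. (This correlation-comparison dichotomy is essentially the classical ERC-style analysis, which under RIP is known to need $\dt\lesssim K^{-1/2}$, so it cannot work with a constant $\dt$.) The paper's argument is structurally different and avoids this count altogether: its Lemma \ref{lm-main} (Zhang's competitor argument, with competitor supported on the $[aK]$ largest remaining coefficients, $a=\dt^{1/2}$) shows that \emph{every} step, productive or wasted, decreases $\|\r^m\|^2$ by at least $(z^m)^2(1-C_1a)$, where $z^m$ is the $[aK]$-th largest remaining coefficient — a much stronger bound than $E_m/K$. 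Telescoping this over exactly the first $K$ iterations gives $\|\r^K\|^2\le C_2\dt^{1/2}K$, the RIP lower bound then gives $\|\x_{T\setminus T^K}\|^2\le \wh C\dt^{1/2}K$, hence $|T\setminus T^K|\le N(\x,\wh C\dt^{1/2}K)$ by the definition (\ref{N-def}); the factor $6$ is then obtained by citing the Wang--Shim theorem (\cite{WS}, Theorem 3.1) as a black box to finish off the remaining indices in $6|T\setminus T^K|$ iterations. Your proposal would have to either reprove that theorem or complete the amortization, and neither is done.

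A second, independent gap: your plan to ``verify a posteriori that $|T\cup T^m|\le 2K$'' is unavailable, because $N(\x,\wh C\dt^{1/2}K)$ can be as large as $K$ (take all $|x_i|$ of order $\dt^{1/4}$, say), so even granting the $6N$ bound on wasted steps the active support can reach size $K+6N\approx 7K$, and every RIP$(2K,\dt)$ correlation estimate you rely on breaks down in the later iterations. The paper confronts exactly this issue by upgrading the RIP order via the doubling inequality $\dt_{2S}(\Phi)\le 3\dt_S(\Phi)$, deducing $\dt_{10K}\le\dt_{16K}\le 27\dt_{2K}\le 0.03$ from the hypothesis $\dt_{2K}<0.001$, which is the order needed for the Wang--Shim step. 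Any correct proof must control RIP constants at order well beyond $2K$; yours never does.
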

Here is a direct corollary of Theorem \ref{T2.1}.
\begin{Corollary}\label{C2.1} Let $K$-sparse $\x$ be such that $|x_i|=1$, $i\in T$, $|T|=K$. Then under assumptions of Theorem \ref{T2.1} the OMP recovers $\x$ exactly within $(1+ 6\wh C\dt^{1/2})K$ iterations.
\end{Corollary}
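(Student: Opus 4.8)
The plan is to read Corollary \ref{C2.1} off Theorem \ref{T2.1} by evaluating the quantity $N(\x,\wh C\dt^{1/2}K)$ for the special signal under consideration. First I would verify that the signal is admissible for Theorem \ref{T2.1}: since $|x_i|=1$ for every $i\in T$ and $\x$ vanishes off $T$, we have $\|\x\|_\infty=1\le 1$, so $\x$ is a $K$-sparse signal of the type the theorem covers. Theorem \ref{T2.1} then already guarantees $\r^{K+6N(\x,\wh C\dt^{1/2}K)}=0$, and the entire task reduces to bounding the iteration count $K+6N(\x,\wh C\dt^{1/2}K)$ by $(1+6\wh C\dt^{1/2})K$, i.e.\ to showing $N(\x,\wh C\dt^{1/2}K)\le \wh C\dt^{1/2}K$.

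The key observation is that unit-magnitude coefficients make the partial energies appearing in the definition of $N(\x,\nu)$ completely explicit: for any $\Lambda\subset T$,
$$
\|\x_\Lambda\|^2=\sum_{i\in\Lambda}|x_i|^2=|\Lambda|.
$$
Hence the defining condition (\ref{N-def}), namely $\|\x_\Lambda\|^2>\nu$ for all $\Lambda\subset T$ with $|\Lambda|\ge N(\x,\nu)+1$, collapses into the purely combinatorial requirement $|\Lambda|>\nu$ for all such $\Lambda$. Because $|\Lambda|$ is monotone in the cardinality, among all $\Lambda$ with $|\Lambda|\ge N(\x,\nu)+1$ it suffices to test the minimal one, $|\Lambda|=N(\x,\nu)+1$; thus the condition is equivalent to $N(\x,\nu)+1>\nu$, and the minimal integer satisfying it obeys $N(\x,\nu)\le\nu$.

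Applying this with $\nu=\wh C\dt^{1/2}K$ gives $N(\x,\wh C\dt^{1/2}K)\le\wh C\dt^{1/2}K$, and substituting into the bound furnished by Theorem \ref{T2.1} yields
$$
K+6N(\x,\wh C\dt^{1/2}K)\le K+6\wh C\dt^{1/2}K=(1+6\wh C\dt^{1/2})K,
$$
so the OMP produces a zero residual within $(1+6\wh C\dt^{1/2})K$ iterations, as asserted. I do not anticipate any genuine obstacle here, since the whole substance of the corollary is already contained in Theorem \ref{T2.1}; the only point warranting a moment's care is the elementary passage from the set-wise condition in (\ref{N-def}) to the scalar inequality $N(\x,\nu)+1>\nu$, which rests solely on the identity $\|\x_\Lambda\|^2=|\Lambda|$ for equal-magnitude coefficients. (Should one wish to sidestep any rounding subtlety one may instead record $N(\x,\nu)\le\lceil\nu\rceil$, which leaves the leading-order constant $6\wh C\dt^{1/2}$ unchanged.)
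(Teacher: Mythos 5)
Your proposal is correct and is exactly the derivation the paper intends: the paper states Corollary \ref{C2.1} as a ``direct corollary'' of Theorem \ref{T2.1} without a separate proof, and the computation you carry out ($\|\x_\Lambda\|^2=|\Lambda|$, hence $N(\x,\nu)\le\nu$ by (\ref{N-def})) is precisely the specialization to unit coefficients of the argument the paper uses for Theorem \ref{T2.2}, where $\|\x_\Lambda\|^2\ge\e_1^2|\Lambda|$ gives $N(\x,\nu)\le\nu/\e_1^2$.
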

\begin{proof}
Set
\begin{equation*}
\Gm^m:=T\setminus T^m.
\end{equation*}
 We fix
 \begin{equation}\label{a-def}
a := \dt^{1/2}.
\end{equation}
Consider $m\in\Z_+ $ such that
\begin{equation}\label{bas-ineq-l}
 |T^m| +[aK] = m+[aK] \le K.
 \end{equation}
Assume that $K\ge K_0 = K_0(a) \ge 1/a$.
Let $z^m$ be the maximal number, satisfying the following inequality
\begin{equation}\label{zl-def}
\left|\{i\in \Gm^m:|x_i| \ge z^m\}\right| \ge [aK],\ \left|\{i\in \Gm^m: |x_i| \le z^m\}\right| \ge |\Gm^m|- [aK].
\end{equation}
In other words $z^m$ is the $[aK]$th largest element out of $\{|x_i|\}_{i\in \Gamma^m}$. We use the following lemma.

\begin{Lemma}\label{lm-main}
Under (\ref{bas-ineq-l})  the following inequality is valid:
\begin{equation*}
\|\r^m\|^2 - \|\r^{m+1}\|^2 \ge (z^m)^2(1-C_1 a).
 \end{equation*}
\end{Lemma}

\begin{proof}
According to (\ref{zl-def}), we can choose sets
\begin{equation}\label{Tl+cardinality}
\Gm^m_+ \subset \Gm^m,\quad |\Gm^m_+| = [aK]
\end{equation}
and
\begin{equation}\label{Tl-def}
\Gm^m_-:=\Gm\setminus \Gm^m_+
\end{equation}
with the following property
\begin{equation}\label{Tl+def}
\min_{i\in T^m_+}|x_i|\ge  z^m \ge \max_{i\in \Gm^m_-}|x_i|.
\end{equation}
Consider $\w\in\R^N$ such that
\begin{equation}\label{w-def}
\w_{T\cap T^m\cup \Gm^m_+} = \x_{T\cap T^m\cup \Gm^m_+},\quad \w_{\Omega\setminus(T\cap T^m\cup \Gm^m_+)} = 0.
\end{equation}
We use several well-known properties of the OMP:
\begin{equation}\label{WS-1}
\|\r^m\|^2 - \|\r^{m+1}\|^2 \ge \sup_{\phi\in\D}\<\r^m,\phi\>^2,
 \end{equation}
\begin{equation}\label{Prp-1}
\sup_{\phi\in\D}|\<\r^m,\phi\>|\ge \frac{|\<r^m,\Phi\u\>|}{\|\u\|_1},\quad \u\in\R^\N,
 \end{equation}
\begin{equation}\label{Prp-2}
\<\r^m, \Phi{{\u}}\> = 0, \mbox{ if }\supp{{\u}}\subset T^m.
 \end{equation}
 In particular
 \begin{equation}\label{Prp-3}
\<\r^m, \Phi\x^m\> = 0.
 \end{equation}
 Using (\ref{Prp-1}) for $\u = \w_{\Omega\setminus T^m}$, we can estimate
\begin{eqnarray*}
|\sup_{\phi\in\D}\<\r^m,\phi\>| & \ge & \frac{|\<\r^m,\Phi\w_{\Omega\setminus T^m}\>|}{\|\w_{\Omega\setminus T^m}\|_1} \stackrel{(\ref{Prp-2})}{=} \frac{|\<\r^m,\Phi\w\>|}{\|\w_{\Omega\setminus T^m}\|_1}\\
& \stackrel{(\ref{Prp-3})}{=} &
\frac{|\<\r^m,\Phi(\w-\x^m)\>|}{\|\w_{\Omega\setminus T^m}\|_1} \ge \frac{|\<\r^m,\Phi(\w-x^m)\>|}{\|\w_{\Omega\setminus T^m}\|_0^{1/2}\|\w_{\Omega\setminus T^m}\|_2}.
 \end{eqnarray*}
Applying (\ref{Tl+cardinality}) and (\ref{w-def}), we obtain from the above inequality
\begin{equation}\label{WS-2mdf}
|\sup_{\phi\in\D}\<\r^m,\phi\>| \ge
\frac{|\<\r^m,\Phi(\w-x^m)\>|}{(|\Gm^m_+|)^{1/2}\|\w_{\Omega\setminus T^m}\|_2} \ge \frac{|\<\r^m,\Phi(\w-x^m)\>|}{(aK)^{1/2}\|\w_{\Omega\setminus T^m}\|_2}.
 \end{equation}
We estimate
\begin{eqnarray*}
\|\r^m\|^2  & = & \|\Phi(\x-\x^m)\|^2 \stackrel{RIP}{\ge} (1-\dt)\|\x-\x^m\|^2\ge(1-\dt)\|(\x-\x^m)_{\Gm^m}\|^2 \\
&= & (1-\dt)\|\x_{\Gm^m}\|^2
= (1-\dt)\|\x_{\Gm^m_+\cup \Gm^m_-}\|^2
\\
& = &
(1-\dt)(\|\x_{\Gm^m_+}\|^2 + \|\x_{\Gm^m_-}\|^2)\stackrel{(\ref{Tl+def}),(\ref{Tl+cardinality})}{\ge} (1-\dt)((z^m)^2[aK] + \|\x_{\Gm^m_-}\|^2),
 \end{eqnarray*}
and
\begin{equation*}
\|\Phi(\w-\x)\|^2\stackrel{(\ref{w-def})}{=}\|\Phi\x_{\Gm^m_-}\|^2\stackrel{RIP}{\le} (1+\dt)\|\x_{\Gm^m_-}\|^2.
 \end{equation*}
Combining two last inequalities, we obtain, for sufficiently large $K_0=K_0(a)=K_0(\e)$,
\begin{eqnarray}
\|\r^m\|^2 - \|\Phi(\w-\x)\|^2 & \ge & (1-\dt)(z^m)^2[aK] -2\dt \|\x_{\Gm^m_-}\|^2 \nonumber \\
& \stackrel{(\ref{Tl+def})}{\ge} & (1-\dt)(z^m)^2[aK] - 2\dt (z^m)^2|\Gm^m_-| \nonumber \\
& \stackrel{(\ref{Tl-def})}{\ge} &
 (1-\dt)(z^m)^2[aK] - 2\dt (z^m)^2K \nonumber\\
 &\ge & (1-2\dt)(z^m)^2(aK) - 2\dt (z^m)^2K \nonumber\\
& = &(z^m)^2aK((1-2\dt)-\frac{2\dt}{a}) \nonumber\\
& \stackrel{(\ref{a-def})}{\ge} &  (z^m)^2aK(1- 4a). \label{diff-est}
 \end{eqnarray}

Following the technique from~\cite{WS}
we have
\begin{eqnarray}
|\<\r^m,\Phi(\w-\x^m)\>| & = &
\frac{1}{2}\left|\|\Phi(\w-\x^m)\|_2^2+ \|\r^m\|^2 -  \|\Phi(\w-\x^m) - r^m\|^2\right|\nonumber\\
& \stackrel{(\ref{rmPhi})}{=}&
\frac{1}{2}\left|\|\Phi(\w-\x^m)\|_2^2+ (\|\r^m\|^2 - \|\Phi(\w-\x)\|^2)\right|\nonumber\\
& \stackrel{(\ref{diff-est})}{\ge}&
\left(\|\Phi(\w-\x^m)\|_2^2(z^m)^2aK(1-4a)\right)^{1/2}\nonumber \\
& = & \|\Phi(\w-\x^m)\|_2 z^m(aK(1-4a))^{1/2}\nonumber\\
& \stackrel{RIP}{\ge} & (1-\dt)^{1/2}\|\w-\x^m\|z^m(aK(1-4a))^{1/2}\nonumber\\
& \ge & \|\w-\x^m\|z^m(aK)^{1/2}(1-c_1 a)\nonumber\\
& \ge &
\|(\w-\x^m)_{\Omega\setminus T^m}\|z^m(aK)^{1/2}(1-c_1 a) \stackrel{(\ref{Tmdef})}{=}\nonumber\\
& = &\|\w_{\Omega\setminus T^m}\|z^m(aK)^{1/2}(1-c_1 a). \label{WS-3}
  \end{eqnarray}

Substituting  (\ref{WS-3}) in (\ref{WS-2mdf}) and (\ref{WS-1}), we finally get
\begin{eqnarray*}
\|\r^m\|^2 - \|\r^{m+1}\|^2 & \ge & \frac{\<\r^m,\Phi(\w-\x^m)\>^2}{aK\|\w_{\Omega\setminus T^m}\|_2^2}\ge\frac{\|\w_{\Omega\setminus T^m}\|^2(z^m)^2aK(1-c_1a)^2}{aK\|\w_{\Omega\setminus T^m}\|_2^2}\\
& \ge &
(z^m)^2(1- C_1a).
 \end{eqnarray*}
 \end{proof}

 We continue to prove Theorem \ref{T2.1}.
Without loss of generality we may assume that $T=\{1,\ldots, K\}$ and that the sequence $\{|x_i|\}_{i = 1}^K$ decreases.
Then using the inequality $|T\cap T^m|\le m$ and the definition (\ref{zl-def}), we have
 \begin{equation*}
 z^m\ge |x_{m + [aK]}|\ge |x_{m + 1 + [aK]}|.
 \end{equation*}
Applying Lemma~\ref{lm-main}, we have for $m\ge 1$, $m + [aK] \le K$,
\begin{equation}\label{fromLmMain}
 \|\r^{m-1}\|^2 - \|\r^{m}\|^2 \ge (z^{m-1})^2(1-C_1a)\ge x_{m+[aK]}^2(1-C_1a).
 \end{equation}

First we bound $\|\r^K\|$ from above
 \begin{eqnarray}\label{rKUpper}
 \|\r^K\|^2 & = & \|\r^0\|^2 - \sum_{m = 1}^K\left(\|\r^{m-1}\|^2 - \|\r^{m}\|^2\right)\nonumber\\
   & \stackrel{(\r^0=\Phi\x)}{=} & \|\Phi \x\|^2 - \sum_{m=1}^K\left(\|\r^{m-1}\|^2 - \|\r^{m}\|^2\right)\nonumber \\
   & \stackrel{RIP}{\le} &
(1+\dt)\sum_{i=1}^K x_i^2 - \sum_{m=1}^{K-[aK]}\left(\|\r^{m-1}\|^2 - \|\r^{m}\|^2\right)\nonumber\\
 & \stackrel{(\ref{fromLmMain})}{\le} & (1+\dt)\sum_{i=1}^K x_i^2 - \sum_{m=1}^{K-[aK]}x_{m+[aK]}^2(1-C_1a)\nonumber\\
& \le & (1+\dt)\sum_{i=1}^K x_i^2 - \sum_{i=1 + [aK]}^{K}x_{i}^2(1-C_1a)\nonumber\\
& \le &
(\dt + C_1a) \sum_{i=1}^K x_i^2  +\sum_{i=1}^{[aK]} x_i^2 \nonumber \\
&\stackrel{|x_i|\le 1}{\le}& (\dt + C_1a)K + [aK] \le KC_2 a \stackrel{(\ref{a-def})}{=}KC_2\dt^{1/2} .
 \end{eqnarray}
Then using RIP, we can estimate $\|\r^K\|$ from below
 \begin{equation}\label{rKLower}
 \|\r^K\|^2 \ge (1-\dt)\sum_{i\in T\setminus T^K}x_i^2.
  \end{equation}
Set
\begin{equation*}
\wh C:=\frac{C_2}{1-\dt}.
  \end{equation*}
Combining this definition with (\ref{rKUpper}) and (\ref{rKLower}), we obtain
 \begin{equation*}
\sum_{i\in T\setminus T^K}x_i^2 \le \wh C K\dt^{1/2}.
  \end{equation*}
Thus, using (\ref{N-def}), we conclude that
 \begin{equation}\label{TTK-ineq}
|T\setminus T^K|\le N(\x,\wh C K\dt^{1/2}).
  \end{equation}

It is known that (see Lemma 1.2 from \cite{CT} and Lemma 1 from \cite{CWX})
\begin{equation*}
 \dt_{2S}(\Phi)\le 3 \dt_{S}(\Phi).
\end{equation*}
Then the condition $\dt < 0.001$ implies that
 \begin{equation}\label{dt10K}
 \dt_{10K}(\Phi)\le \dt_{16K}(\Phi) \le 27 \dt_{2K}(\Phi)   \le 27\dt\le 0.03.
\end{equation}
Now we can apply the improvement of Zhang's theorem
obtained by Wang and Shim (\cite{WS}, Theorem 3.1). It claims that
under (\ref{dt10K}) we have
 \begin{equation*}
\r^{K + 6 |T\setminus T^K|} = 0,
\end{equation*}
Therefore, taking into account (\ref{TTK-ineq}), we finally get
 \begin{equation*}
\r^{K + 6 N(\x,\wh C K\dt^{1/2})} = 0.
\end{equation*}

\end{proof}

As corollaries of Theorem~\ref{T2.1} we obtain Theorem~\ref{T1.1} and the following result.

\begin{Theorem}\label{T2.2} For any $\e_1,\e_2>0$ there exist $\de=\de(\e_1,\e_2)>0$ and $K_0 = K_0(\e_1,\e_2)$ such that for any dictionary $\D$, $\dt^{{RIP}}_{2K}(\D)<\dt$, $K\ge K_0$, the following statement holds. If $\r^0=f_0 \in\Sigma_K(\D)$ and its nonzero coefficients belong to $[-1,1]\setminus(-\e_1,\e_1)$, then $\r^{\lceil K(1+\e_2)\rceil}=0$.
\end{Theorem}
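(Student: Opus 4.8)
The plan is to derive Theorem \ref{T2.2} directly from Theorem \ref{T2.1} by turning the assumption on the size of the coefficients into a clean bound for the quantity $N(\x,\nu)$ that governs the iteration count there. The hypothesis that the nonzero coefficients of $\x$ lie in $[-1,1]\setminus(-\e_1,\e_1)$ has two effects. First, $\|\x\|_\infty\le 1$, so $\x$ meets the normalization required by Theorem \ref{T2.1}. Second, $|x_i|\ge\e_1$ for every $i\in T=\supp\x$, whence for any $\Lambda\subset T$ one has $\|\x_\Lambda\|^2=\sum_{i\in\Lambda}x_i^2\ge|\Lambda|\,\e_1^2$.

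First I would use this lower bound to estimate $N(\x,\nu)$. By (\ref{N-def}), $N(\x,\nu)$ is the least integer $n$ such that $\|\x_\Lambda\|^2>\nu$ for every $\Lambda\subset T$ with $|\Lambda|\ge n+1$. Since $\|\x_\Lambda\|^2\ge|\Lambda|\,\e_1^2$, the condition $(n+1)\e_1^2>\nu$ already forces $\|\x_\Lambda\|^2>\nu$ for all such $\Lambda$; taking $n=\lfloor\nu/\e_1^2\rfloor$ gives
$$
N(\x,\nu)\le \nu/\e_1^2 .
$$
Specializing to $\nu=\wh C\dt^{1/2}K$, the value occurring in Theorem \ref{T2.1}, we get $N(\x,\wh C\dt^{1/2}K)\le \wh C\dt^{1/2}K/\e_1^2$, so that the iteration count of Theorem \ref{T2.1} satisfies
$$
K+6N(\x,\wh C\dt^{1/2}K)\le K\Bigl(1+6\wh C\dt^{1/2}\e_1^{-2}\Bigr).
$$

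Next I would choose the threshold $\dt=\dt(\e_1,\e_2)$ of the statement small enough to absorb the correction term into $\e_2$, namely any $\dt\le\min\{0.001,(\e_1^2\e_2/(6\wh C))^2\}$. This keeps $\dt<0.001$, so Theorem \ref{T2.1} is applicable, and it guarantees $6\wh C\dt^{1/2}\e_1^{-2}\le\e_2$; setting $K_0(\e_1,\e_2):=K_0(\dt)$ with $K_0(\cdot)$ from Theorem \ref{T2.1}, we obtain for $K\ge K_0$
$$
K+6N(\x,\wh C\dt^{1/2}K)\le K(1+\e_2)\le\lceil K(1+\e_2)\rceil .
$$
Theorem \ref{T2.1} yields $\r^{K+6N(\x,\wh C\dt^{1/2}K)}=0$, and since the OMP residual remains zero once the signal is recovered (then $f_0\in\Phi_m$ for all larger $m$, so $G_m=f_0$), performing the larger number $\lceil K(1+\e_2)\rceil$ of iterations leaves it at zero, giving $\r^{\lceil K(1+\e_2)\rceil}=0$.

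The step needing the most care is the bookkeeping of the rounding: one must check that the floor in $N(\x,\nu)\le\lfloor\nu/\e_1^2\rfloor$ and the ceiling in $\lceil K(1+\e_2)\rceil$ do not break the inequality $K+6N\le\lceil K(1+\e_2)\rceil$, and that $\dt$ can be chosen depending only on $\e_1,\e_2$ and the absolute constant $\wh C$, independently of $K$ and of $\D$. Beyond this, the argument is a mechanical substitution into Theorem \ref{T2.1}; there is no genuine analytic obstacle past the elementary coefficient bound $|x_i|\ge\e_1$.
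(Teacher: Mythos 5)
Your proposal is correct and follows essentially the same route as the paper's own proof: the coefficient bound $|x_i|\ge\e_1$ gives $\|\x_\Lambda\|^2\ge\e_1^2|\Lambda|$, hence $N(\x,\nu)\le\nu/\e_1^2$, and then $\dt$ is chosen so that $6\wh C\dt^{1/2}\e_1^{-2}\le\e_2$, exactly as in the paper. Your extra care about the rounding and about the residual remaining zero after recovery are fine (and harmless) details the paper leaves implicit.
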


\begin{proof}
It is clear that for any $\Lambda\subset T$ we have
 \begin{equation*}
\|\x_\Lambda\|^2 \ge \e_1^2|\Lambda|.
\end{equation*}
Hence according to~(\ref{N-def}) we get
 \begin{equation*}
N(\x,\nu) \le\frac{\nu}{\e_1^2}.
\end{equation*}
Then
 \begin{equation*}
6N(\x,\wh C K \dt^{1/2}) \le K6\frac{\wh C \dt^{1/2}}{\e_1^2}.
\end{equation*}
Thus, to complete the proof it remains to choose $\dt=\dt(\e_1,\e_2)$ such that
 \begin{equation*}
6\frac{\wh C \dt^{1/2}}{\e_1^2} \le \e_2.
\end{equation*}
\end{proof}

\begin{Lemma}\label{lm-prob} Assume that $p<1$ and numbers $x_i$, $1\le i\le K$, $K\ge K_0(p)$  are uniformly distributed on $[-1,1]$ independent random variables. Then
\begin{equation*}
\left|\{i: |x_i| <p\}\right| \le 2pK
\end{equation*}
 with  probability greater than $1-\exp(-C(p)K)$.
\end{Lemma}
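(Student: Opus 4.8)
The plan is to recognize the quantity $|\{i : |x_i| < p\}|$ as a sum of independent Bernoulli random variables and then to apply a standard exponential (Chernoff-type) tail bound for the upper deviation of such a sum from its mean.

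First I would introduce the indicator variables $Y_i := \mathbf{1}\{|x_i| < p\}$ for $1 \le i \le K$. Since each $x_i$ is uniformly distributed on $[-1,1]$, the event $\{|x_i| < p\}$ corresponds to the subinterval $(-p,p)$ of length $2p$, so that $\Pr(Y_i = 1) = 2p/2 = p$; here the hypothesis $p < 1$ is exactly what guarantees that $p$ is a genuine probability. By independence of the $x_i$, the $Y_i$ are i.i.d. Bernoulli variables with parameter $p$, and
$$
S := \sum_{i=1}^K Y_i = |\{i : |x_i| < p\}|
$$
has mean $\mathbb{E} S = pK$. The event we must control, namely $\{S \le 2pK\}$, is therefore precisely the event that $S$ does not exceed twice its mean.

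Next I would apply the multiplicative Chernoff bound to the upper tail of $S$. Writing $\mu = pK$, the standard estimate gives, for any $\delta > 0$,
$$
\Pr(S \ge (1+\delta)\mu) \le \left(\frac{e^{\delta}}{(1+\delta)^{1+\delta}}\right)^{\mu}.
$$
Taking $\delta = 1$ yields $\Pr(S \ge 2pK) \le (e/4)^{pK} = \exp(-(\ln 4 - 1)\,pK)$. Since $S$ is integer-valued, $\Pr(S > 2pK) \le \Pr(S \ge 2pK)$, and passing to complements gives
$$
\Pr(S \le 2pK) \ge 1 - \exp(-(\ln 4 - 1)\,pK),
$$
which is the assertion with the admissible constant $C(p) = (\ln 4 - 1)\,p > 0$. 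One may equally invoke Hoeffding's inequality for the bounded variables $Y_i \in [0,1]$, which gives $\Pr(S \ge 2pK) \le \exp(-2p^2 K)$ and hence $C(p) = 2p^2$; either route works.

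I do not expect a substantive obstacle in this lemma: the reduction to a Bernoulli sum, the computation $\Pr(Y_i = 1) = p$, and the concentration estimate are all routine. The only points deserving a word of care are the verification that $\{|x_i| < p\}$ has probability exactly $p$ (which is where $p < 1$ enters) and the observation that the tail bound in fact holds for every $K \ge 1$; the threshold $K_0(p)$ plays no role in the estimate itself and is retained only so that the statement matches the hypotheses under which it is used in the proof of Theorem \ref{T1.1}.
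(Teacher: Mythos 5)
Your proposal is correct and takes essentially the same route as the paper: both reduce the count $\left|\{i: |x_i|<p\}\right|$ to a sum of i.i.d.\ Bernoulli($p$) indicators and apply a standard exponential concentration bound. The only cosmetic difference is that the paper invokes the two-sided Hoeffding inequality, which produces a factor $2$ in front of the exponential (absorbed by taking $K\ge K_0(p)$), whereas your one-sided multiplicative Chernoff bound (or one-sided Hoeffding) avoids that factor and, as you note, holds for every $K\ge 1$.
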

\begin{proof}
For $i$, $1\le i\le K$, we set $\xi_i=0$, if $|x_i|\ge p$, and $\xi_i = 1$, otherwise.
So $\xi_i$ has Bernoulli distribution with
\begin{equation*}
\texttt{P}\{\xi_i=1\} = p,\quad \texttt{P}\{\xi_i=0\}=1-p,\quad \texttt{E}\xi_i = p.
\end{equation*}
By Hoeffding's inequality (see, for instance, \cite{Tbook}, p. 197) we obtain
$$
\texttt{P}\left\{\left|\frac{1}{K}\sum_{i=1}^K\xi_i-p\right|\ge p\right\} \le 2\exp(-Kp^2/2).
$$
Clearly,
$$
|\{i:  |x_i| <p\}| = \sum_{i=1}^K \xi_i.
$$
Therefore,
\begin{equation*}
\texttt{P}\left\{\left|\{i: |x_i| <p\}\right| \le 2p\right\}=\texttt{P}\left\{\frac{1}{K}\sum_{i=1}^K\xi_i\le 2p\right\} \ge 1- 2\exp(-Kp^2/2).
\end{equation*}

\end{proof}

We now give a proof of Theorem~\ref{T1.1} from the Introduction.
\begin{proof}
Let
\begin{equation*}
\varkappa := \varkappa(\dt):= \wh C \dt^{1/2}.
\end{equation*}
According to Lemma~\ref{lm-main} with probability greater than $1-\exp(-C(\dt)K)$ we have
\begin{equation}\label{derivedfromprob}
\left|\{i: |x_i| <\varkappa^{1/3}\}\right| \le 2\varkappa^{1/3}K
\end{equation}
To prove the theorem we need to estimate $N(\x,\varkappa K)$. Consider $\Lambda\subset T$ such that
\begin{equation*}
\|\x_\Lambda\|^2\le\varkappa K.
\end{equation*}
Then we estimate
\begin{eqnarray*}
|\Lambda|& = & |\{i\in\Lambda: |x_i| < \varkappa^{1/3}\}| + |\{i\in\Lambda: |x_i| \ge \varkappa^{1/3}\}|\\
& \le &
|\{i\in\Lambda: |x_i| < \varkappa^{1/3}\}| +\frac{\varkappa K}{(\varkappa^{1/3})^2} \stackrel{(\ref{derivedfromprob})}{\le}
2\varkappa^{1/3}K + \varkappa^{1/3}K=3\varkappa^{1/3}K.
\end{eqnarray*}
Therefore, by definition (\ref{N-def}) we have
\begin{equation*}
  N(\x,\varkappa K)\le 3\varkappa^{1/3}K= 3(\wh C)^{1/3}\dt^{1/6}K.
\end{equation*}
To complete the proof it remains to apply Theorem~\ref{T2.1} for $\dt<0.001$ providing
\begin{equation*}
N(\x,\varkappa K) <\e K/6.
\end{equation*}

\end{proof}

\section{Lebesgue-type inequalities}

 We discuss here the Lebesgue-type inequalities for the WCGA with $\tau =\{t\}$, $t\in(0,1]$.   We repeat the above assumptions {\bf A1} and {\bf A2} with remarks on the corresponding properties of dictionaries.
 For a given $f_0$ let sparse element (signal)
 $$
 f:=f^\e=\sum_{i\in T}x_ig_i
 $$
 be such that $\|f_0-f^\e\|\le \e$ and $|T|=K$. For $A\subset T$ denote
 $$
 f_A:=f_A^\e := \sum_{i\in A}x_ig_i.
 $$

 Here are two assumptions that we will use.

 {\bf A1.} We say that $f=\sum_{i\in T}x_ig_i$ satisfies the Nikol'skii-type $\ell_1X$ inequality with parameter $r$ if
 \begin{equation}\label{C1}
 \sum_{i\in A} |x_i| \le C_1|A|^{r}\|f_A\|,\quad A\subset T,\quad r\ge 1/2.
 \end{equation}
 We say that a dictionary $\D$ has the Nikol'skii-type $\ell_1X$ property with parameters $K$, $r$   if any $K$-sparse element satisfies the Nikol'skii-type
 $\ell_1X$ inequality with parameter $r$.

{\bf A2.}  We say that $f=\sum_{i\in T}x_ig_i$ has incoherence property with parameters $D$ and $U$ if for any $A\subset T$ and any $\Lambda$ such that $A\cap \Lambda =\emptyset$, $|A|+|\Lambda| \le D$ we have for any $\{c_i\}$
\begin{equation}\label{C2}
\|f_A-\sum_{i\in\Lambda}c_ig_i\|\ge U^{-1}\|f_A\|.
\end{equation}
We say that a dictionary $\D$ is $(K,D)$-unconditional with a constant $U$ if for any $f=\sum_{i\in T}x_ig_i$ with
$|T|\le K$ inequality (\ref{C2}) holds.

The term {\it unconditional} in {\bf A2} is justified by the following remark. The above definition of $(K,D)$-unconditional dictionary is equivalent to the following definition. Let $\D$ be such that any subsystem of $D$ distinct elements $e_1,\dots,e_D$ from $\D$ is linearly independent and for any $A$ with $|A|\le K$ and any coefficients $\{c_i\}$ we have
$$
\|\sum_{i\in A}c_ie_i\| \le U\|\sum_{i=1}^Dc_ie_i\|.
$$

Let $\D$ be the Riesz dictionary with depth $D$ and parameter $\delta\in (0,1)$. This class of dictionaries is a generalization of the class of classical Riesz bases. We give a definition in a general Hilbert space (see \cite{Tbook}, p. 306).
\begin{Definition}\label{D3.1} A dictionary $\D$ is called the Riesz dictionary with depth $D$ and parameter $\delta \in (0,1)$ if, for any $D$ distinct elements $e_1,\dots,e_D$ of the dictionary and any coefficients $a=(a_1,\dots,a_D)$, we have
\begin{equation*}
(1-\de)\|a\|_2^2 \le \|\sum_{i=1}^D a_ie_i\|^2\le(1+\de)\|a\|_2^2.
\end{equation*}
We denote the class of Riesz dictionaries with depth $D$ and parameter $\delta \in (0,1)$ by $R(D,\de)$.
\end{Definition}
It is clear that the term Riesz dictionary with depth $D$ and parameter $\delta \in (0,1)$ is another name for a dictionary satisfying the Restricted Isometry Property with parameters $D$ and $\de$. The following simple lemma holds.
\begin{Lemma}\label{L3.1} Let $\D\in R(D,\de)$ and let $e_j\in \D$, $j=1,\dots, s$. For $f=\sum_{i=1}^s a_ie_i$ and $A \subset \{1,\dots,s\}$ denote
$$
S_A(f) := \sum_{i\in A} a_ie_i.
$$
If $s\le D$ then
$$
\|S_A(f)\|^2 \le (1+\de)(1-\de)^{-1} \|f\|^2.
$$
\end{Lemma}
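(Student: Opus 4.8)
The plan is to read the conclusion off directly from the two-sided Riesz bounds in Definition \ref{D3.1}: apply the lower bound to the full expansion $f$ and the upper bound to its truncation $S_A(f)$, and then compare the two coefficient sums. No iteration, induction, or optimization is needed; this is purely a two-line estimate.

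First I would record the lower estimate. Since $s \le D$, the elements $e_1,\dots,e_s$ together with the coefficient vector $a=(a_1,\dots,a_s)$ satisfy the hypotheses of Definition \ref{D3.1} (the case of fewer than $D$ elements being covered by padding with zero coefficients), so that
$$\|f\|^2 = \Big\|\sum_{i=1}^s a_ie_i\Big\|^2 \ge (1-\de)\|a\|_2^2 = (1-\de)\sum_{i=1}^s |a_i|^2.$$
Next I would apply the upper estimate of Definition \ref{D3.1} to $S_A(f)=\sum_{i\in A} a_ie_i$, which is an expansion in the $|A|\le s\le D$ distinct dictionary elements $\{e_i\}_{i\in A}$; this gives
$$\|S_A(f)\|^2 \le (1+\de)\sum_{i\in A} |a_i|^2.$$

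The final step is the trivial monotonicity $\sum_{i\in A} |a_i|^2 \le \sum_{i=1}^s |a_i|^2 = \|a\|_2^2$, valid because $A\subset\{1,\dots,s\}$. Chaining the three displays yields $\|S_A(f)\|^2 \le (1+\de)\|a\|_2^2 \le (1+\de)(1-\de)^{-1}\|f\|^2$, which is exactly the assertion. There is no genuine obstacle here: the only point requiring any care is to direct the two halves of the Riesz condition at the correct objects — the lower bound at the full vector $a$ and the upper bound at the subvector supported on $A$ — and to observe that a subset of $s\le D$ distinct elements still qualifies for Definition \ref{D3.1}. Note also that no smallness of $\de$ is used, only $\de<1$ so that $(1-\de)^{-1}$ is finite.
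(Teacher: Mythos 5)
Your proof is correct: applying the lower Riesz bound to the full expansion $f$, the upper bound to $S_A(f)$, and the monotonicity $\sum_{i\in A}|a_i|^2\le\|a\|_2^2$ gives exactly the claimed constant $(1+\de)(1-\de)^{-1}$. The paper states Lemma \ref{L3.1} without proof (citing it as a simple known fact from \cite{Tbook}), and your two-line argument is precisely the standard one intended there, so there is nothing to add beyond the minor observation you already made that subsets of fewer than $D$ distinct elements are handled by padding with zero coefficients.
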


Lemma \ref{L3.1} implies that if $\D\in R(D,\de)$ then it is $(D,D)$-unconditional with a constant $U=(1+\de)^{1/2}(1-\de)^{-1/2}$.

We need the concept of cotype of a Banach space $X$. We say that $X$ has cotype $q\ge 2$ if for any finite number of elements $u_i\in X$ we have the inequality
$$
\left(\text{Average}_{\pm}\|\sum_{i}\pm u_i \|^q\right)^{1/q} \ge C_q\left(\sum_{i}\|u_i\|^q\right)^{1/q}.
$$
It is known that the $L_p$ spaces with $2\le p<\infty$ have cotype $q=p$ and $L_p$ spaces with $1< p\le 2$ have cotype $2$.
\begin{Remark}\label{R3.1} Suppose $\D$ is $(K,K)$-unconditional with a constant $U$. Assume that $X$ is of cotype $q$ with a constant $C_q$. Then $\D$ has the Nikol'skii-type $\ell_1X$ property with parameters $K,1-1/q$ and $C_1=2UC_q^{-1}$.
\end{Remark}
\begin{proof} Our assumption about $(K,K)$-unconditionality implies: for any $A$, $|A|\le K$, we have
$$
\|\sum_{i\in A} \pm x_i g_i\| =\|\sum_{i\in A_+}x_i g_i - \sum_{i\in A_-}x_i g_i\| \le 2U\|\sum_{i\in A} x_i g_i\|.
$$
Therefore, by $q$-cotype assumption
$$
\|\sum_{i\in A} x_i g_i\|^q \ge (2U)^{-q}C_q^q\sum_{i\in A}|x_i|^q.
$$
This implies
$$
\sum_{i\in A} |x_i| \le |A|^{1-1/q}\left(\sum_{i\in A}|x_i|^q\right)^{1/q} \le 2UC_q^{-1}|A|^{1-1/q}\|\sum_{i\in A} x_i g_i\|.
$$
\end{proof}
The above proof also gives the following individual function version of Remark \ref{R3.1}.
\begin{Remark}\label{R3.2} Suppose $f=\sum_{i\in T}x_ig_i$ has incoherence property with parameters $D$ and $U$. Assume that $X$ has cotype $q$ with a constant $C_q$. Then $f$ satisfies the Nikol'skii-type $\ell_1X$ inequality with parameter $r=1-1/q$ and $C_1=2UC_q^{-1}$.
\end{Remark}
It is known that a Hilbert space has cotype $2$. Therefore, Remark \ref{R3.2} shows that assumption {\bf A2} implies assumption {\bf A1} with $r=1/2$. This explains how Theorem \ref{T1.6} is derived from Theorem \ref{T1.5}.

We note that the $(K,CK)$-unconditionality assumption on the dictionary $\D$ in a Hilbert space $H$ is somewhat weaker than the assumption $\D\in R(CK,\de)$. Also, our theorems do not assume that the dictionary satisfies
assumptions {\bf A1} and {\bf A2}; we only assume that the individual function $f$,   a $K$-sparse approximation of a given $f_0$, satisfies {\bf A1} and {\bf A2}.

In assumption (\ref{C2}) we always have $U\ge 1$. In the extreme case $U=1$ assumption (\ref{C2}) is a strong assumption that leads to strong results.
\begin{Proposition}\label{P3.1} Let $X$ be a uniformly smooth Banach space. Assume that $f=\sum_{i\in T}x_ig_i$,  $|T|=K$, and the set of indices $T$ has the following property. For any $g\in\D$ distinct from $g_i$, $i\in T$, and any $c_i$, $c$ we have
\begin{equation}\label{3.2}
\|\sum_{i\in T}c_ig_i-cg\|\ge \|\sum_{i\in T}c_ig_i\|.
\end{equation}
Then the WCGA with $t_k\neq 0$, $k=1,2,\dots$, recovers $f$ exactly after $K$ iterations.
\end{Proposition}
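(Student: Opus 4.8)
The plan is to show that, under hypothesis (\ref{3.2}), every greedy step of the WCGA selects a \emph{new} dictionary element from among $\{g_i\}_{i\in T}$, so that after at most $K$ steps the selected subspace already contains $f$ and the best approximant reproduces $f$ exactly. We may assume $f\neq 0$. The engine of the argument is the uniform smoothness of $X$, which makes the norm Fr\'echet differentiable away from the origin, with the derivative given by the norming functional: for $x\neq 0$ one has $\lim_{c\to 0}(\|x+cy\|-\|x\|)/c = F_x(y)$.

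First I would record two vanishing properties of norming functionals. \emph{Property (a):} if $f_m\neq 0$ then $F_{f_m}(\phi)=0$ for every $\phi\in\Phi_m$. Indeed, since $G_m$ is the best approximant to $f_0$ from $\Phi_m$, the scalar function $c\mapsto\|f_m-c\phi\|=\|f_0-(G_m+c\phi)\|$ attains a global minimum at $c=0$; differentiating at $c=0$ gives $F_{f_m}(\phi)=0$. \emph{Property (b):} this is where (\ref{3.2}) enters. If $g\in\D$ is distinct from all $g_i$, $i\in T$, and $h=\sum_{i\in T}c_ig_i\neq 0$, then hypothesis (\ref{3.2}) says $\|h-cg\|\ge\|h\|$ for every scalar $c$, so $c\mapsto\|h-cg\|$ is minimized at $c=0$; differentiating gives $F_h(g)=0$. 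Thus $F_h$ annihilates every element of $\D$ lying outside $\{g_i\}_{i\in T}$.

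Then I would run an induction on $m$. Note $f_0=f\in V:=\sp\{g_i:i\in T\}$, and inductively $\Phi_{m-1}\subset V$ forces $f_{m-1}=f_0-G_{m-1}\in V$, so (b) is always applicable to the current residual. Suppose $f_{m-1}\neq 0$ and $\varphi_1,\dots,\varphi_{m-1}$ are distinct elements of $\{g_i\}_{i\in T}$. Since $F_{f_{m-1}}(f_{m-1})=\|f_{m-1}\|>0$ and $f_{m-1}\in V$, expanding shows at least one $F_{f_{m-1}}(g_i)\neq 0$, hence $\sup_{g\in\D}|F_{f_{m-1}}(g)|>0$, and by (b) this supremum is carried by $\{g_i\}_{i\in T}$. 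Because $t_m\neq 0$, the selected $\varphi_m$ satisfies $|F_{f_{m-1}}(\varphi_m)|\ge t_m\sup_{g\in\D}|F_{f_{m-1}}(g)|>0$, so $F_{f_{m-1}}(\varphi_m)\neq 0$; by (b) this forces $\varphi_m\in\{g_i\}_{i\in T}$, while by (a) $F_{f_{m-1}}$ vanishes on $\Phi_{m-1}$, so $\varphi_m\notin\Phi_{m-1}$. Hence $\varphi_m$ is a genuinely new element of $\{g_i\}_{i\in T}$. Consequently, as long as the residual is nonzero, each iteration adds a new element of the $K$-element set $\{g_i\}_{i\in T}$, so after at most $K$ iterations $\Phi_K=V\ni f$; since $G_K$ is the best approximant to $f$ from $\Phi_K$, we get $G_K=f$ and $f_K=0$.

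The conceptual heart, and the step I would be most careful with, is Property (b): converting the geometric separation condition (\ref{3.2}) into the exact vanishing of $F_{f_{m-1}}$ on all outside dictionary elements. This rests on the differentiability of the norm (from uniform smoothness), the identification of the derivative of $c\mapsto\|h-cg\|$ at $0$ with $-F_h(g)$, and the crucial structural observation that the residual always stays in $V$ so that (\ref{3.2}) genuinely applies at every stage. Once (a) and (b) are in place, the induction that each step picks a fresh index from $T$ is routine bookkeeping.
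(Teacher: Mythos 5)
Your proof is correct and takes essentially the same route as the paper: the paper likewise deduces from (\ref{3.2}) that the norming functional of any nonzero element of $\sp\{g_i\}_{i\in T}$ vanishes on every dictionary element outside $\{g_i\}_{i\in T}$ (citing Lemma 6.9, p.~342, of the Temlyakov book rather than re-deriving it from smoothness, as you do), and then argues inductively that each WCGA iteration must select a fresh $g_i$, $i\in T$, so that $f_K=0$. Your write-up only adds explicitly the bookkeeping the paper leaves implicit, namely your Property (a) and the positivity of the supremum, which justify why the selected element is new at each step.
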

\begin{proof} It is known (see, for instance, \cite{Tbook}, Lemma 6.9, p. 342) that (\ref{3.2}) implies
$$
F_f(g)=0,\qquad g\in \D\setminus\{g_i\}_{i\in T}.
$$
Thus, at the first iteration the WCGA picks $\varphi_1\in \{g_i\}_{i\in T}$. Then $f_1$ has the form $\sum_{i\in T}c_ig_i$ and we repeat the above argument. Then $\varphi_2\in \{g_i\}_{i\in T}\setminus \{\varphi_1\}$. After $K$ iterations all $g_i$, $i\in T$, will be taken and therefore we will have $f_K=0$.
\end{proof}

Proposition \ref{P3.1} can be applied in the following situation. Assume that $\Psi=\{\psi_i\}_{i=1}^\infty$ is a monotone basis for a uniformly smooth Banach space $X$. Then any $f=\sum_{i=1}^K x_i\psi_i$ will be recovered by the WCGA after $K$ iterations.
In particular, this applies to the Haar basis in $L_p$, $1<p<\infty$.

We now proceed to main results of this section.

\begin{Theorem}\label{T3.1} Let $X$ be a Banach space with $\rho(u)\le \gamma u^2$. Suppose for a given $f_0$ we have $\|f_0-f^\e\|\le \e$ with $K$-sparse $f:=f^\e$ satisfying {\bf A1} and {\bf A2}. Then for any $k\ge 0$ we have for $K+m \le D$
$$
\|f_m\| \le \|f_k\|\exp\left(-\frac{c_1(m-k)}{K^{2r}}\right) +2\e,
$$
where $c_1:= \frac{t^2}{32\gamma C_1^2 U^2}$.
\end{Theorem}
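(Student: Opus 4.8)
The plan is to prove the one-step geometric-decay recurrence $\|f_m\|\le\|f_{m-1}\|(1-c_1K^{-2r})$ (valid while $\|f_{m-1}\|$ still exceeds a fixed multiple of $\e$) and then iterate it. For the smoothness step, observe that $G_{m-1}+\lambda\varphi_m\in\Phi_m$ for every real $\lambda$, so the best-approximant property of $G_m$ gives $\|f_m\|\le\|f_{m-1}-\lambda\varphi_m\|$. Inserting the standard estimate for smooth spaces, $\|x-\mu y\|\le\|x\|-\mu F_x(y)+2\|x\|\rho(\mu/\|x\|)$ (valid for unit $y$ and $\mu\ge0$), applied with $x=f_{m-1}$, $y=\pm\varphi_m$ (sign chosen so that $F_{f_{m-1}}(y)=|F_{f_{m-1}}(\varphi_m)|$), and using $\rho(u)\le\gamma u^2$, yields
$$\|f_m\|\le\|f_{m-1}\|-\mu|F_{f_{m-1}}(\varphi_m)|+\frac{2\gamma\mu^2}{\|f_{m-1}\|}.$$
Minimizing over $\mu\ge0$ and then invoking the weakness inequality $|F_{f_{m-1}}(\varphi_m)|\ge t\sup_{g\in\D}|F_{f_{m-1}}(g)|$ gives
$$\|f_m\|\le\|f_{m-1}\|\Bigl(1-\frac{t^2}{8\gamma}\bigl(\sup_{g\in\D}|F_{f_{m-1}}(g)|\bigr)^2\Bigr),$$
so the whole problem reduces to bounding $\sup_{g\in\D}|F_{f_{m-1}}(g)|$ from below by a multiple of $(C_1UK^r)^{-1}$.

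For that lower bound I would use the standard orthogonality of the Chebyshev residual, namely $F_{f_{m-1}}(\phi)=0$ for all $\phi\in\Phi_{m-1}$ (see \cite{Tbook}; here $X$ is uniformly smooth since $\rho(u)\le\gamma u^2$). Let $A:=\{i\in T:g_i\notin\{\varphi_1,\dots,\varphi_{m-1}\}\}$ be the part of the support of $f$ not yet selected. For $i\in T\setminus A$ we have $g_i\in\Phi_{m-1}$, so those terms are annihilated and, using $f_0=f_{m-1}+G_{m-1}$,
$$F_{f_{m-1}}(f_A)=F_{f_{m-1}}(f)=F_{f_{m-1}}(f_{m-1})+F_{f_{m-1}}(G_{m-1})-F_{f_{m-1}}(f_0-f)\ge\|f_{m-1}\|-\e.$$
Expanding $F_{f_{m-1}}(f_A)=\sum_{i\in A}x_iF_{f_{m-1}}(g_i)$ and applying {\bf A1} on $A$ (with $|A|\le K$) then gives
$$\sup_{g\in\D}|F_{f_{m-1}}(g)|\ge\frac{|F_{f_{m-1}}(f_A)|}{\sum_{i\in A}|x_i|}\ge\frac{\|f_{m-1}\|-\e}{C_1K^r\|f_A\|}.$$

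The hard part will be that $\|f_A\|$ must be controlled by $\|f_{m-1}\|$ rather than by the fixed, non-decreasing quantity $\|f\|$ (the naive choice $A=T$ leaves a useless $\|f\|$ in the denominator), and this is precisely what {\bf A2} supplies. Writing $G_{m-1}=\sum_{i\in T\setminus A}b_ig_i+\sum_l d_lh_l$ with $h_l\notin\{g_i\}_{i\in T}$, we have $f-G_{m-1}=f_{m-1}-(f_0-f)$, hence $\|f-G_{m-1}\|\le\|f_{m-1}\|+\e$, and therefore
$$\|f_{m-1}\|+\e\ge\Bigl\|f_A+\sum_{i\in T\setminus A}(x_i-b_i)g_i-\sum_ld_lh_l\Bigr\|\ge U^{-1}\|f_A\|,$$
where the last step is {\bf A2} with index set $\Lambda=(T\setminus A)\cup\{h_l\}$, disjoint from $A$ and satisfying $|A|+|\Lambda|\le K+m\le D$. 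Thus $\|f_A\|\le U(\|f_{m-1}\|+\e)$, and combining the displays yields
$$\sup_{g\in\D}|F_{f_{m-1}}(g)|\ge\frac{\|f_{m-1}\|-\e}{C_1UK^r(\|f_{m-1}\|+\e)}.$$

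Finally I would close by cases. If $\|f_l\|\le2\e$ for some $k\le l\le m$, then monotonicity $\|f_m\|\le\|f_l\|\le2\e$ already gives the claim. Otherwise $\|f_{m-1}\|>2\e$ for all steps in range, so $(\|f_{m-1}\|-\e)/(\|f_{m-1}\|+\e)\ge\tfrac12$, whence $\sup_{g\in\D}|F_{f_{m-1}}(g)|\ge 1/(2C_1UK^r)$; feeding this into the second display of the first paragraph gives $\|f_m\|\le\|f_{m-1}\|(1-c_1K^{-2r})$ with $c_1=t^2/(32\gamma C_1^2U^2)$. Iterating this bound from $k$ to $m$ and using $1-x\le e^{-x}$ produces $\|f_m\|\le\|f_k\|\exp(-c_1(m-k)/K^{2r})+2\e$, as claimed.
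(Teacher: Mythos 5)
Your proof follows essentially the same route as the paper's: the same lower bound for $\sup_{g\in\D}|F_{f_{m-1}}(g)|$ obtained from the annihilation property $F_{f_{m-1}}(\Phi_{m-1})=\{0\}$ together with {\bf A1} applied to $A=\Gamma^{m-1}$, the same use of {\bf A2} to get $\|f_A\|\le U(\|f_{m-1}\|+\e)$, and the same smoothness-plus-weakness step. The gap is in your last paragraph. Under your case hypothesis $\|f_{m-1}\|>2\e$, the quotient $(\|f_{m-1}\|-\e)/(\|f_{m-1}\|+\e)$ is bounded below only by $1/3$ (it equals $1/3$ at $\|f_{m-1}\|=2\e$ and increases from there); your claimed bound $1/2$ would require $\|f_{m-1}\|\ge 3\e$. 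With the correct factor $1/3$ your recursion gives $\|f_m\|\le\|f_{m-1}\|\bigl(1-\tfrac{t^2}{72\gamma C_1^2U^2K^{2r}}\bigr)$, i.e.\ the theorem with $c_1=t^2/(72\gamma C_1^2U^2)$ rather than the stated $c_1=t^2/(32\gamma C_1^2U^2)$. Moreover, no choice of threshold repairs this within your scheme: the monotonicity case forces the threshold to be at most $2\e$ (otherwise you exceed the additive $2\e$ in the conclusion), while the ratio bound $\ge 1/2$ forces it to be at least $3\e$. These are incompatible, so the strategy of absorbing $\e$ multiplicatively into the contraction ratio necessarily loses a constant factor and cannot yield the theorem as stated.

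The paper keeps the full constant by treating $\e$ additively rather than multiplicatively. Do not minimize over $\mu$ before inserting the bound on $S_m:=\sup_{g\in\D}|F_{f_{m-1}}(g)|$; instead substitute $S_m\ge(\|f_{m-1}\|-\e)/(C_1K^r\|f_{A_m}\|)$ into $\|f_m\|\le\|f_{m-1}\|-\mu tS_m+2\gamma\mu^2/\|f_{m-1}\|$ and split the numerator, which gives $\|f_m\|\le\|f_{m-1}\|\bigl(1-\tfrac{\mu t}{C_1K^r\|f_{A_m}\|}+\tfrac{2\gamma\mu^2}{\|f_{m-1}\|^2}\bigr)+\tfrac{\mu t\e}{C_1K^r\|f_{A_m}\|}$. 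Now choose $\mu=t\|f_{A_m}\|/(16\gamma C_1K^rU^2)$ and use $\|f_{A_m}\|\le U(\|f_{m-1}\|+\e)\le 2U\|f_{m-1}\|$ (valid under the weaker assumption $\|f_{m-1}\|\ge\e$) only in the quadratic term: the linear term equals $2u$ with $u:=c_1/K^{2r}$, $c_1=t^2/(32\gamma C_1^2U^2)$, the quadratic term is at most $u$, and the additive term equals $2u\e$, so $\|f_m\|\le\|f_{m-1}\|(1-u)+2u\e$. Iterating sums the $\e$-contributions into the geometric series $2u\e\sum_{j\ge0}(1-u)^j=2\e$ while the contraction retains the full constant; and if $\|f_l\|<\e$ for some $l\le m$, monotonicity of the residuals gives $\|f_m\|<\e\le 2\e$ directly. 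This additive bookkeeping is exactly what your argument is missing.
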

\begin{proof}  Let
$$
 f:=f^\e=\sum_{i\in T}x_ig_i,\quad |T|=K,\quad g_i\in \D.
 $$
 Denote by $T^m$ the set of indices of $g_i$ picked by the WCGA after $m$ iterations, $\Gamma^m := T\setminus T^m$.
 Denote by $A_1(\D)$ the closure in $X$ of the convex hull of the symmetrized dictionary $\D^\pm:=\{\pm g,g\in D\}$.   We will bound  $\|f_m\|$ from above.  Assume $\|f_{m-1}\|\ge \e$. Let $m>k$. We bound from below
 $$
 S_m:=\sup_{\phi\in A_1(\D)} |F_{f_{m-1}}(\phi)|.
 $$
 Denote $A_m:=\Gamma^{m-1}$. Then
 $$
 S_m \ge F_{f_{m-1}}(f_{A_m}/\|f_{A_m}\|_1),
 $$
 where $\|f_A\|_1 := \sum_{i\in A} |x_i|$. Next, by Lemma 6.9, p. 342, from \cite{Tbook} we obtain
 $$
 F_{f_{m-1}}(f_{A_m}) = F_{f_{m-1}}(f^\e) \ge \|f_{m-1}\|-\e.
 $$
 Thus
 $$
 S_m\ge \|f_{A_m}\|^{-1}_1(\|f_{m-1}\|-\e).
 $$
 By (\ref{C1}) we get
 $$
 \|f_{A_m}\|_1 \le C_1 |A_m|^{r}\|f_{A_m}\| \le C_1 K^{r}\|f_{A_m}\| .
 $$
 Then
 \begin{equation}\label{3.4}
 S_m \ge \frac{\|f_{m-1}\|-\e}{C_1 K^{r} \|f_{A_m}\|}.
 \end{equation}
 From the definition of the modulus of smoothness we have for any $\la$
\begin{equation}\label{3.4'}
\|f_{m-1}-\la \varphi_m \| +\|f_{m-1}+\la \varphi_m \| \le 2\|f_{m-1}\|\left(1+\rho\left(\frac{\la}{\|f_{m-1}\|}\right)\right)
\end{equation}
and by (1) from the definition of the WCGA and Lemma 6.10 from \cite{Tbook}, p. 343, we get
$$
|F_{f_{m-1}}(\varphi_m)| \ge t \sup_{g\in \D} |F_{f_{m-1}}(g)| =
$$
$$
t\sup_{\phi\in A_1(\D)} |F_{f_{m-1}}(\phi)|=tS_m.
$$
Then either $F_{f_{m-1}}(\varphi_m)\ge tS_m$ or $F_{f_{m-1}}(-\varphi_m)\ge tS_m$. Both cases are treated in the same way. We demonstrate the case $F_{f_{m-1}}(\varphi_m)\ge tS_m$. We have for $\la \ge 0$
$$
\|f_{m-1}+\la \varphi_m\| \ge F_{f_{m-1}}(f_{m-1}+\la\varphi_m)\ge  \|f_{m-1}\| + \la tS_m.
$$
From here and from (\ref{3.4'}) we obtain
 $$
 \|f_m\| \le \|f_{m-1}-\la \varphi_m\| \le \|f_{m-1}\| +\inf_{\la\ge 0} (-\la t S_m + 2\|f_{m-1}\|\rho(\la/\|f_{m-1}\|)).
 $$
 We discuss here the case $\rho(u) \le \gamma u^2$. Using (\ref{3.4}) we get
 $$
 \|f_m\| \le \|f_{m-1}\|\left(1-\frac{\la t}{C_1K^{r}\|f_{A_m}\|} +2\gamma \frac{\la^2}{\|f_{m-1}\|^2}\right) + \frac{\e \la t}{C_1K^{r}\|f_{A_m}\|}.
 $$
 Let $\la_1$ be a solution of
 $$
 \frac{\la t}{2C_1K^{r}\|f_{A_m}\|} = 2\gamma \frac{\la^2}{\|f_{m-1}\|^2},\quad \la_1 = \frac{t\|f_{m-1}\|^2}{4\gamma C_1 K^{r}\|f_{A_m}\|}.
 $$
 Our assumption (\ref{C2}) gives
 \begin{eqnarray*}
 \|f_{A_m}\|&=&\|(f^\e-G_{m-1})_{A_m}\| \le U\|f^\e-G_{m-1}\|\\
 &\le&U(\|f_0-G_{m-1}\|+\|f_0-f^\e\|) \le U(\|f_{m-1}\|+\e).
 \end{eqnarray*}
 Specify
 $$
 \la = \frac{t \|f_{A_m}\|}{16\gamma C_1 K^{r} U^2}.
 $$
 Then, using $\|f_{m-1}\| \ge \e$ we get
 $$
 \frac{\la}{\la_1} = \frac{\|f_{A_m}\|^2}{4\|f_{m-1}\|^2} \le 1
 $$
  and  obtain
 $$
 \|f_m\| \le \|f_{m-1}\| \left( 1- \frac{t^2}{32\gamma C_1^2 U^2 K^{2r}}\right) + \frac{\e t^2}{16\gamma C_1^2 U^2K^{2r}}.
 $$
 Denote $c_1:= \frac{t^2}{32\gamma C_1^2 U^2}$. Then
 $$
 \|f_m\| \le \|f_{k}\|\exp\left(-\frac{c_1(m-k)}{K^{2r}}\right) + 2\e.
 $$
\end{proof}

\begin{Theorem}\label{T3.2} Let $X$ be a Banach space with $\rho(u)\le \gamma u^2$. Suppose $K$-sparse $f^\e$ satisfies {\bf A1}, {\bf A2} and $\|f_0-f^\e\|\le \e$. Then the WCGA with weakness parameter $t$ applied to $f_0$ provides
$$
\|f_{C(t,\gamma,C_1)U^2\ln (U+1) K^{2r}}\| \le CU\e\quad\text{for}\quad K+C(t,\gamma,C_1)U^2\ln (U+1) K^{2r}\le D
$$
with an absolute constant $C$ and $C(t,\gamma,C_1) = C_2\gamma C_1^2 t^{-2}$.
\end{Theorem}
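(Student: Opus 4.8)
The plan is to derive Theorem \ref{T3.2} from the geometric decay already established in Theorem \ref{T3.1}, but a naive application is not enough: with $k=0$ Theorem \ref{T3.1} only gives $\|f_m\|\le\|f_0\|\exp(-c_1 m/K^{2r})+2\e$, and since nothing in {\bf A1}, {\bf A2} bounds the ratio $\|f_0\|/\e$, pushing the exponential term below $CU\e$ this way would cost about $\frac{K^{2r}}{c_1}\ln(\|f_0\|/\e)$ iterations, a count depending on $\|f_0\|/\e$. The whole issue is to replace this $\ln(\|f_0\|/\e)$ by $\ln(U+1)$; the mechanism for doing so is that uncaptured coordinates are absorbed into $\Phi_m$ as the residual shrinks, and this feeds back to accelerate the decay.

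First I would record the capture inequality coming from {\bf A2}. Writing $\Gamma^m:=T\setminus T^m$ for the uncaptured indices, {\bf A2} with $A=\{i\}$ ($i\in\Gamma^m$) and $\Lambda=\Gamma^m\setminus\{i\}$ (coefficients $-x_j$) gives $\|f_{\Gamma^m}\|\ge U^{-1}\max_{i\in\Gamma^m}|x_i|$, while {\bf A2} with $A=\Gamma^m$ and $\Lambda=T^m$ (absorbing $f_{T^m\cap T}-G_m\in\Phi_m$, legitimate since $|\Gamma^m|+|T^m|\le K+m\le D$) gives $\|f_{\Gamma^m}\|\le U(\|f_m\|+\e)$. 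Combining,
$$
\max_{i\in\Gamma^m}|x_i|\le U^2(\|f_m\|+\e),
$$
so every coordinate with $|x_i|>U^2(\|f_m\|+\e)$ has already been picked, and therefore $|\Gamma^m|\le\#\{i\in T:|x_i|\le U^2(\|f_m\|+\e)\}$: the number of active coordinates is slaved to the current residual.

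Next I would sharpen Theorem \ref{T3.1} itself. Rerunning its proof with $A_m=\Gamma^{m-1}$ but keeping $|A_m|=|\Gamma^{m-1}|$ rather than bounding it by $K$ yields the refined one-step estimate
$$
\|f_m\|-2\e\le(\|f_{m-1}\|-2\e)\Bigl(1-\frac{c_1}{|\Gamma^{m-1}|^{2r}}\Bigr),\qquad\|f_{m-1}\|\ge\e.
$$
Feeding the capture inequality into this refinement closes the loop: as $\|f_{m-1}\|$ decreases, $\Gamma^{m-1}$ is confined to ever smaller coordinates and hence shrinks, so the per-step factor improves. I would then organize the iterations into dyadic blocks according to the residual level (equivalently, a dyadic threshold on the still-active coordinate magnitudes): on the block where $\|f_m\|$ stays in one band the active set has cardinality at most the count of coordinates below the matching threshold, so the band is crossed in $O(U^2K^{2r})$ iterations, and one arranges the bookkeeping so that only $O(\ln(U+1))$ scales carry this full cost. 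Multiplying gives the total count $C(t,\gamma,C_1)U^2\ln(U+1)K^{2r}$, and once $\|f_m\|\le CU\e$ the best-approximation property of the WCGA keeps all later residuals below this level.

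The main obstacle is exactly this last counting step. The capture inequality, used by itself, only guarantees that a coordinate $g_i$ is absorbed once the residual has fallen below $|x_i|/U^2$, and estimating the cost of reaching that level through the crude geometric bound reintroduces a spurious $\ln(\|f_0\|/\e)$ (or $\ln K$) factor. The resolution is to avoid logarithmic linearization and instead telescope the refined product $\prod_m(1-c_1|\Gamma^{m-1}|^{-2r})$ directly: when the active set has collapsed to $O(1)$ coordinates the per-step factors are bounded away from $1$ and the product decays super-geometrically, and it is this collapse that swallows the dependence on $\|f_0\|/\e$ and leaves only the $\ln(U+1)$ coming from the $U^2$ gap in the capture inequality. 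Making the telescoping rigorous — i.e. showing the active coordinates are absorbed \emph{promptly} rather than merely eventually, so that $|\Gamma^{m-1}|$ drops fast enough to match the product estimate — is the delicate heart of the argument, and is precisely where {\bf A1} (through $c_1$) and {\bf A2} (through the capture inequality) have to be balanced against one another.
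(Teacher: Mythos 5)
Your two preparatory steps are sound: the capture inequality $\max_{i\in\Gamma^m}|x_i|\le U^2(\|f_m\|+\e)$ follows correctly from {\bf A2}, and the refined one-step estimate with $|\Gamma^{m-1}|^{2r}$ in place of $K^{2r}$ is exactly what rerunning the proof of Theorem \ref{T3.1} gives (it is the paper's inequality (\ref{3.8}) with $B=\emptyset$). But the argument has a genuine gap at precisely the point you flag as ``the delicate heart'': the claim that the bookkeeping can be arranged so that only $O(\ln(U+1))$ residual bands carry the full cost $O(U^2K^{2r})$ is asserted, not proved, and within your framework it is false. The capture inequality is your \emph{only} mechanism for shrinking $\Gamma^m$, and it is vacuous until the residual falls below $\min_{i\in\Gamma^m}|x_i|/U^2$. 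Take all nonzero coefficients equal to $v$ and $\D$ orthonormal in a Hilbert space ($U=1$, $r=1/2$, $C_1=1$): then {\bf A1} forces $\|f^\e\|\approx K^{1/2}v$, so the residual must decay by a factor $\approx K^{1/2}$ before a single coordinate is certified captured; at the per-step rate $(1-c_1/K^{2r})$ this costs $\sim c_1^{-1}K^{2r}\ln K$ iterations, already exceeding the budget $C(t,\gamma,C_1)U^2\ln(U+1)K^{2r}$ by a $\ln K$ factor. Your proposed repair --- super-geometric collapse of the product $\prod_m(1-c_1|\Gamma^{m-1}|^{-2r})$ once the active set is $O(1)$ --- addresses the endgame, but the excess cost is incurred at the \emph{start}, when the active set is large and no capture occurs; no later acceleration can refund it.

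The paper's proof avoids this by never waiting for individual coordinates to be captured, and by indexing its dyadic structure by \emph{cardinality of subsets of $\Gamma^k$} rather than by residual level. For each $j$ it takes $B_j\subset\Gamma^k$ of minimal norm among subsets with $|B_j|\ge|\Gamma^k|-2^{j-1}$, and $A_j=\Gamma^k\setminus B_j$, so $|A_j|\le 2^{j-1}$. The one-step estimate (\ref{3.8}) is proved with floor $\|f_{B_j}\|+\e$ rather than $\e$: the residual only needs to decay \emph{relative to} $\|f_{B_j}\|$, which removes any dependence on $\|f_0\|/\e$ or on the spread of the coefficients. Running $\beta|A_j|^{2r}\le\beta 2^{2r(j-1)}$ iterations in band $j$ makes the band costs grow geometrically, so the total through band $L$ is $O(\beta 2^{2rL})$ --- the telescoping over bands is free, with no logarithmic factor, and $\beta\sim c_1^{-1}\ln(U+1)$ is fixed once by the requirement $16Ue^{-c_1\beta}<1$. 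The conclusion of a round is a dichotomy: either $\|f_{m_L}\|\le CU\e$ (done), or $\|f_{\Gamma^{m_L}}\|<\|f_{B_{L-1}}\|$, which by the \emph{minimality} of $B_{L-1}$ (property (\ref{3.5})) forces $|\Gamma^{m_L}|<|\Gamma^k|-2^{L-2}$: the uncaptured set shrinks by a definite amount because its \emph{norm} dropped below the minimum possible norm at its cardinality, not because any particular coordinate was absorbed. A final power-sum inequality over the rounds bounds the total iteration count by $2^{4r+1}\beta K^{2r}$. If you want to salvage your approach, the missing ingredient is exactly this relative-floor estimate together with a norm-minimality (rather than coefficient-threshold) definition of the sets whose cardinality you track.
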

We formulate an immediate corollary of Theorem \ref{T3.2} with $\e=0$.
\begin{Corollary}\label{C3.1} Let $X$ be a Banach space with $\rho(u)\le \gamma u^2$. Suppose $K$-sparse $f$ satisfies {\bf A1}, {\bf A2}. Then the WCGA with weakness parameter $t$ applied to $f$ recovers it exactly after $C(t,\gamma,C_1)U^2\ln (U+1) K^{2r}$ iterations under condition $K+C(t,\gamma,C_1)U^2\ln (U+1) K^{2r}\le D$.
\end{Corollary}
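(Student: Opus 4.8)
Since Corollary \ref{C3.1} is the specialization of Theorem \ref{T3.2} to $\e=0$ (for an exactly $K$-sparse $f$ one takes $f^\e=f$ and $\e=0$, and then $\|f_{m^*}\|\le CU\e$ reads $f_{m^*}=0$, i.e.\ the best approximant $G_{m^*}$ from $\Phi_{m^*}$ equals $f$, which is exact recovery), the whole content lies in Theorem \ref{T3.2}, whose proof I now sketch. The engine is the geometric decay of Theorem \ref{T3.1}. Put $m^*:=C_2\gamma C_1^2 t^{-2}U^2\ln(U+1)K^{2r}$; with $c_1=t^2/(32\gamma C_1^2U^2)$ one has $\exp(-c_1 m^*/K^{2r})=(U+1)^{-C_2/32}$, so Theorem \ref{T3.1} applied from $k=0$ gives $\|f_{m^*}\|\le\|f_0\|(U+1)^{-C_2/32}+2\e$.

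The difficulty is that $\|f_0\|$ is not controlled by $\e$, so this alone does not yield $\|f_{m^*}\|\le CU\e$. To convert decay into the stated bound I would exploit \textbf{A2}. Writing $\Gamma^m:=T\setminus T^m$ and combining the best-approximation identity $F_{f_m}(\psi)=0$ for $\psi\in\Phi_m$ with \textbf{A2} for $A=\Gamma^m$, $\Lambda=T^m$ (legitimate since $|\Gamma^m|+m\le K+m^*\le D$), one gets the two-sided comparison $U^{-1}\|f^\e_{\Gamma^m}\|\le\|f_m\|+\e$ and $\|f_m\|\le\|f^\e_{\Gamma^m}\|+\e$. Thus, up to the additive level $\e$ and the factor $U$, the residual has the same size as the still-uncaptured part $f^\e_{\Gamma^m}$ of the signal; in particular, once $\|f_m\|$ is driven below $U^{-2}\min_{i\in T}|x_i|$ the set $\Gamma^m$ must be empty, and then $f_m=0$ when $\e=0$.

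The plan is then to run the algorithm and bound the number of iterations until either $\|f_m\|\le CU\e$ or $\Gamma^m=\emptyset$. On any maximal block of iterations during which no new index of $T$ enters $T^m$, the set $\Gamma^m$, hence $\|f^\e_{\Gamma^m}\|$, is frozen; Theorem \ref{T3.1} forces geometric decay of $\|f_m\|$, while the lower bound $\|f_m\|\ge U^{-1}\|f^\e_{\Gamma^m}\|-\e$ caps how far it can fall, so such a block has length $O(\gamma C_1^2 t^{-2}U^2\ln(U+1)K^{2r})$ before a new element of $T$ must be captured (or the residual reaches the $\e$ level). Since $|T|=K$, there are at most $K$ capture events, and summing over them completes a first, non-optimal version of the argument.

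The main obstacle is sharpness. This block-by-block counting charges a full $O(K^{2r}\ln(U+1))$ iterations to each of the $K$ captures, producing the spurious exponent $K^{2r+1}$; removing the extra factor of $K$ to reach $K^{2r}$ (which for $r=1/2$ in a Hilbert space is Zhang's optimal $O(K)$) is the hard part. It requires a Banach analogue of the per-step progress estimate of Lemma \ref{lm-main}, in which $\|f_{m-1}\|-\|f_m\|$ is bounded below not by a single remaining coefficient but by the combined mass of the largest $\approx aK$ of them, followed by a telescoping over $\sum_{i\in T}$ that is insensitive to the dynamic range of $\{x_i\}$. In a Hilbert space the orthogonality of $f_m$ to $\Phi_m$ makes such subset estimates transparent; in a general Banach space the norming functional $F_{f_{m-1}}$ does not decompose over subsets of $T$, and controlling the progress through \textbf{A1}, \textbf{A2} and $\rho(u)\le\gamma u^2$ is precisely what both forces the extra $\ln(U+1)$ factor and constitutes the genuine development of Zhang's technique.
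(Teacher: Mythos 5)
Your reduction of the corollary to Theorem \ref{T3.2} with $\e=0$ is exactly the paper's (one-line) proof of the corollary, and your diagnosis of the central difficulty is accurate. But the proof of Theorem \ref{T3.2} that you then sketch does not establish the statement: your block-per-capture counting (each maximal block during which $\Gamma^m$ is frozen lasts at most $O(\gamma C_1^2t^{-2}U^2\ln(U+1)K^{2r})$ iterations, and there are at most $K$ captures) yields only $O(U^2\ln(U+1)K^{2r+1})$ iterations, and you concede that removing the extra factor of $K$ is ``the hard part'' which you leave open. Since the exponent $K^{2r}$ is precisely the content of the claimed bound, this is a genuine gap, not a missing detail. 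Moreover, the fix you gesture at --- a Banach analogue of the per-step progress estimate of Lemma \ref{lm-main}, telescoped over individual coefficients --- is not how the paper closes the gap, and it is unclear it can work, for the reason you yourself give: without orthogonality the norming functional $F_{f_{m-1}}$ does not decompose over subsets of $T$.

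The paper's mechanism is different: captures are counted in \emph{batches}, and the iteration cost of a batch scales with the batch size, not with $K$. Starting from iteration $k$, one chooses sets $B_j\subset\Gamma^k$ with $|B_j|\ge|\Gamma^k|-2^{j-1}$ minimizing $\|f_{B_j}\|$ among sets of that cardinality (so that $\|f_Q\|<\|f_{B_j}\|$ forces $|Q|<|\Gamma^k|-2^{j-1}$, inequality (\ref{3.5})), and picks the first index $L$ with $\|f_{B_{L-1}}\|\ge b\|f_{B_L}\|$, where $b=1/(2\eta)$, $\eta=e^{-c_1\beta}$, $\beta\sim\ln(U+1)/c_1$. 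Running the refined decay estimate (\ref{3.8}) --- the Theorem \ref{T3.1} argument with $A=A_j:=\Gamma^k\setminus B_j$ and error floor $\|f_{B_j}\|+\e$ --- for $\beta|A_j|^{2r}$ iterations at stage $j$, one gets $\|f_{m_L}\|\le 8e^{-c_1\beta}\|f_{B_{L-1}}\|+4\e$ after at most roughly $\beta 2^{2rL}$ iterations in total, since $|A_j|\le 2^{j-1}$. By {\bf A2}, when $\|f_{B_{L-1}}\|\ge 10U\e$ this forces $\|f_{\Gamma^{m_L}}\|<\|f_{B_{L-1}}\|$, hence by (\ref{3.5}) $|\Gamma^{m_L}|<|\Gamma^k|-2^{L-2}$: a batch of $2^{L-2}$ indices is captured at a cost proportional to $2^{2rL}$ iterations. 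Summing over the successive phases, the support reductions $2^{L_j-2}$ must add up to $K$, and the subadditivity inequality $(a_1+\cdots+a_n)^{\theta}\le a_1^{\theta}+\cdots+a_n^{\theta}$ with $\theta=1/(2r)\le 1$ converts this into the bound $m_{L_1}+\cdots+m_{L_n}\le 2^{4r+1}\beta K^{2r}$, which is the claimed $C(t,\gamma,C_1)U^2\ln(U+1)K^{2r}$; with $\e=0$ the terminal residual vanishes, giving exact recovery. This batching step --- charging $2^{2rL}$ iterations for $2^{L-2}$ captures and then exploiting subadditivity of $x\mapsto x^{1/(2r)}$ --- is the idea missing from your proposal.
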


 \begin{proof} We use the above notations
   $T^m $ and  $\Gamma^m := T\setminus T^m$. Let $k\ge 0$ be fixed. Suppose
 $$
 2^{n-1} <|\Gamma^k| \le 2^n.
 $$
 For $j=1,2,\dots,n,n+1$ consider the following pairs of sets $A_j,B_j$: $A_{n+1}=\Gamma^k$, $B_{n+1}=\emptyset$; for $j\le n$, $A_j:=\Gamma^k\setminus B_j$ with $B_j\subset \Gamma^k$ is such that $|B_j|\ge |\Gamma^k|-2^{j-1}$ and for any set $J\subset \Gamma^k$ with $|J|\ge |\Gamma^k|-2^{j-1}$ we have
 $$
 \|f_{B_j}\| \le \|f_J\|.
 $$
 We note that this implies that if for some $Q\subset \Gamma^k$ we have
 \begin{equation}\label{3.5}
 \|f_Q\| <\|f_{B_j}\|\quad \text{then}\quad |Q|< |\Gamma^k|-2^{j-1}.
 \end{equation}

 For a given $b> 1$, to be specified later, denote by $L$ the index such that $(B_0:=\Gamma^k)$
 $$
 \|f_{B_0}\| < b\|f_{B_1}\|,
 $$
 $$
 \|f_{B_1}\| < b\|f_{B_2}\|,
 $$
 $$
 \dots
 $$
 $$
 \|f_{B_{L-2}}\| < b\|f_{B_{L-1}}\|,
 $$
 $$
 \|f_{B_{L-1}}\| \ge b\|f_{B_{L}}\|.
 $$
 Then
 \begin{equation}\label{3.6}
 \|f_{B_j}\| \le b^{L-1-j}\|f_{B_{L-1}}\|, \quad j=1,2,\dots,L.
 \end{equation}

  We now proceed to a general step. Let $m>k$ and let $A,B \subset \Gamma ^k$ be such that
 $ A=\Gamma^k\setminus B$. As above we bound $S_m$ from below. It is clear that $S_m\ge 0$.
 Denote $A_m := A\cap \Gamma^{m-1}$. Then
 $$
 S_m \ge F_{f_{m-1}}(f_{A_m}/\|f_{A_m}\|_1).
 $$
   Next,
 $$
 F_{f_{m-1}}(f_{A_m}) = F_{f_{m-1}}(f_{A_m}+f_B-f_B).
 $$
 Then $f_{A_m}+f_B=f^\e - f_\Lambda$ with $F_{f_{m-1}}(f_\Lambda)=0$. Moreover, it is easy to see that $F_{f_{m-1}}(f^\e)\ge\|f_{m-1}\|-\e$. Therefore,
 $$
 F_{f_{m-1}}(f_{A_m}+f_B-f_B)\ge \|f_{m-1}\|-\e - \|f_B\|.
 $$
 Thus
 $$
 S_m\ge \|f_{A_m}\|^{-1}_1\max(0,\|f_{m-1}\|-\e-\|f_B\|).
 $$
 By (\ref{C1}) we get
 $$
 \|f_{A_m}\|_1 \le C_1 |A_m|^{r}\|f_{A_m}\| \le C_1 |A|^{r}\|f_{A_m}\| .
 $$
 Then
 \begin{equation}\label{3.7}
 S_m \ge \frac{\|f_{m-1}\|-\|f_B\|-\e}{C_1 |A|^{r} \|f_{A_m}\|}.
 \end{equation}
 From the definition of the modulus of smoothness we have for any $\la$
$$
\|f_{m-1}-\la \varphi_m \| +\|f_{m-1}+\la \varphi_m \| \le 2\|f_{m-1}\|(1+\rho(\frac{\la}{\|f_{m-1}\|}))
$$
and by (1) from the definition of the WCGA and Lemma 6.10 from \cite{Tbook}, p. 343, we get
$$
|F_{f_{m-1}}(\varphi_m)| \ge t \sup_{g\in \D} |F_{f_{m-1}}(g)| =
$$
$$
t\sup_{\phi\in A_1(\D)} |F_{f_{m-1}}(\phi)|.
$$
 From here  we obtain
 $$
 \|f_m\| \le \|f_{m-1}\| +\inf_{\la\ge 0} (-\la t S_m + 2\|f_{m-1}\|\rho(\la/\|f_{m-1}\|)).
 $$
 We discuss here the case $\rho(u) \le \gamma u^2$. Using (\ref{3.7}) we get
 $$
 \|f_m\| \le \|f_{m-1}\|\left(1-\frac{\la t}{C_1|A|^{r}\|f_{A_m}\|} +2\gamma \frac{\la^2}{\|f_{m-1}\|^2}\right) + \frac{\la t(\|f_B\|+\e)}{C_1 |A|^{r} \|f_{A_m}\|}.
 $$
 Let $\la_1$ be a solution of
 $$
 \frac{\la t}{2C_1|A|^{r}\|f_{A_m}\|} = 2\gamma \frac{\la^2}{\|f_{m-1}\|^2},\quad \la_1 = \frac{t\|f_{m-1}\|^2}{4\gamma C_1 |A|^{r}\|f_{A_m}\|}.
 $$
 Our assumption (\ref{C2}) gives
 $$
 \|f_{A_m}\| \le U(\|f_{m-1}\|+\e).
 $$
 Specify
 $$
 \la = \frac{t \|f_{A_m}\|}{16\gamma C_1 |A|^{r} U^2}.
 $$
 Then $\la \le \la_1$ and we obtain
 \begin{equation}\label{3.8}
 \|f_m\| \le \|f_{m-1}\| \left( 1- \frac{t^2}{32\gamma C_1^2 U^2 |A|^{2r}}\right)+ \frac{ t^2 (\|f_B\|+\e)}{16\gamma C_1^2 |A|^{2r} U^2}.
 \end{equation}
 Denote $c_1:= \frac{t^2}{32\gamma C_1^2 U^2}$ and $c_2:=\frac{t^2}{16\gamma C_1^2 U^2}$.
 This implies for $m_2>m_1\ge k$
 \begin{equation*}
 \|f_{m_2}\| \le \|f_{m_1}\| (1-c_1/|A|^{2r})^{m_2-m_1} + \frac{c_2(m_2-m_1)}{|A|^{2r}}(\|f_B\|+\e).
 \end{equation*}
 Define $m_0:= k$ and, inductively,
 $$
 m_j=m_{j-1} + \beta |A_j|^{2r},\quad j=1,\dots,n.
 $$
 At iterations from $m_{j-1}+1$ to $m_j$ we use $A=A_j$ and obtain from (\ref{3.8}) that
 $$
 \|f_m\| \le \|f_{m-1}\|(1-u) + 2u(\|f_B\|+\e),\quad u:= c_1|A|^{-2r}.
 $$
 Using $1-u\le e^{-u}$ and $\sum_{k=0}^\infty (1-u)^k = 1/u$ we derive from here
 $$
 \|f_{m_j}\| \le \|f_{m_{j-1}}\|e^{-c_1\beta}+ 2 (\|f_{B_j}\|+\e).
 $$
 We continue it up to $j=L$. Denote $\eta:=e^{-c_1\beta}$. Then
 $$
 \|f_{m_L}\| \le \|f_k\|\eta^L +2 \sum_{j=1}^L (\|f_{B_j}\|+\e)\eta^{L-j}.
 $$
 We bound the $\|f_k\|$. It follows from the definition of $f_k$ that $\|f_k\|$ is the error of best approximation of $f_0$ by the subspace $\Phi_k$. Representing $f_0=f+f_0-f$ we see that $\|f_k\|$ is not greater than  the error of best approximation of $f$ by the subspace $\Phi_k$ plus $\|f_0-f\|$. This implies $\|f_k\|\le \|f_{B_0}\| +\e$. Therefore using (\ref{3.6}) we continue
 $$
 \le (\|f_{B_0}\|+\e)\eta^L +2\sum_{j=1}^L (\|f_{B_{L-1}}\|(\eta b)^{L-j} b^{-1} + \e\eta^{L-j})
 $$
 $$
 \le b^{-1}\|f_{B_{L-1}}\|\left( (\eta b)^L + 2 \sum_{j=1}^L (\eta b)^{L-j}\right) + \frac{2\e}{1-\eta}.
 $$
We will specify $\beta$ later. However, we note that it will be chosen in such a way that guarantees $\eta< 1/2$. Choose $b=\frac{1}{2\eta}$. Then
 \begin{equation}\label{3.10}
 \|f_{m_L}\| \le \|f_{B_{L-1}}\|8  e^{-c_1\beta}+4\e.
 \end{equation}

 By (\ref{C2}) we get
 $$
 \|f_{\Gamma^{m_L}}\| \le U(\|f_{m_L}\|+\e) \le U(\|f_{B_{L-1}}\|8 e^{-c_1\beta} + 5\e).
 $$
 If $\|f_{B_{L-1}}\|\le 10U \e$ then by (\ref{3.10})
 $$
 \|f_{m_L}\| \le CU\e.
 $$
 If $\|f_{B_{L-1}}\|\ge 10U\e$ then
 making $\beta$ sufficiently large to satisfy $16U e^{-c_1\beta }<1$ so that $\beta = \frac{C_3\ln (U+1)}{c_1}$,  we get
 $$
 U(\|f_{B_{L-1}}\|8e^{-c_1\beta} +5\e)< \|f_{B_{L-1}}\|
 $$
 and therefore
 $$
 \|f_{\Gamma^{m_L}}\|<\|f_{B_{L-1}}\|.
 $$
 This implies (see (\ref{3.5})
 $$
 |\Gamma^{m_L}| < |\Gamma^k| - 2^{L-2}.
 $$
   We begin with $f_0$ and apply the above argument (with $k=0$). As a result we either get the required inequality or we reduce the cardinality of support of $f$ from $|T|=K$ to $|\Gamma^{m_{L_1}}|<|T|-2^{L_1-2}$, $m_{L_1}\le \beta 2^{2rL_1}$. We continue the process and build a sequence $m_{L_j}$ such that $m_{L_j}\le \beta 2^{2rL_j}$ and after $m_{L_j}$ iterations we reduce the support by at least $2^{L_j-2}$. We also note that $m_{L_j}\le \beta 2^{2r} K^{2r}$. We continue this process till the following inequality is satisfied for the first time
 \begin{equation}\label{3.11}
  m_{L_1}+\dots+m_{L_n}  \ge 2^{4r}\beta K^{2r}.
 \end{equation}
 Then, clearly,
 $$
  m_{L_1}+\dots+m_{L_n} \le 2^{4r+1}\beta  K^{2r}.
 $$
 Using the inequality
 $$
 (a_1+\cdots +a_n)^{\theta} \le a_1^\theta+\cdots +a_n^\theta,\quad a_j\ge 0,\quad \theta\in (0,1]
 $$
 we derive from (\ref{3.11})
 $$
 2^{L_1-2}+\dots+2^{L_n-2} \ge \left(2^{2r(L_1-2)}+\dots+2^{2r(L_n-2)}\right)^{\frac{1}{2r}}
 $$
 $$
 \ge
 2^{-2}\left(2^{2rL_1}+\dots+2^{2rL_n}\right)^{\frac{1}{2r}}
 $$
 $$
 \ge 2^{-2}\left((\beta)^{-1}(m_{L_1}+\dots+m_{L_n})\right)^{\frac{1}{2r}}\ge K.
 $$
 Thus, after not more than $N:=2^{4r+1}\beta  K^{2r}$ iterations we recover $f$ exactly and then $\|f_N\| \le \|f_0-f\|\le \e$.

 \end{proof}

%\begin{Theorem}\label{T1.5} Let $X$ be a Banach space with $\rho(u)\le \gamma u^2$. Suppose $K$-sparse $f^\e$ satisfies (\ref{C1}) and (\ref{C2}) and $\|f_0-f^\e\|\le \e$. Then the WCGA with weakness parameter $t$ applied to $f_0$ provides
%$$
%\|f_{C(t,\gamma,C_1)U^2\ln (U+1) K^{2r}}\| \le C\e\quad\text{for}\quad K+C(t,\gamma,C_1)U^2\ln (U+1) K^{2r}\le D
%$$
%with an absolute constant $C$.
%\end{Theorem}
  Theorem \ref{T1.5} from the Introduction follows from Theorems \ref{T3.2} and \ref{T3.1}.

\section{Discussion}

We begin with presenting some known results about exact recovery and the Lebesgue-type inequalities for incoherent dictionaries.
In this case we use another natural generalization of the WOMP. This generalization of the WOMP was introduced in \cite {T23}. In the paper \cite{ST} we proved Lebesgue-type inequalities for that algorithm. We now formulate the corresponding results.  We recall a generalization of the concept of $M$-coherent dictionary to the case of Banach spaces (see, for instance, \cite{Tbook}).

Let $\D$ be a dictionary in a Banach space $X$.  The coherence parameter of this dictionary is defined as
$$
M(\D):= \sup_{g\neq h;g,h\in\D}\sup_{F_g}|F_g(h)|.
$$
In general, a norming functional $F_g$ is not unique. This is why we take $\sup_{F_g}$ over all norming functionals of $g$ in the definition of $M(\D)$. We do not need $\sup_{F_g}$ in the definition of $M(\D)$ if for each
$g\in\D$ there is a unique norming functional $F_g\in X^*$. Then we define $\D^*:=\{F_g,g\in\D\}$ and call $\D^*$ a {\it dual dictionary} to a dictionary $\D$.
 It is known that the uniqueness of the norming functional $F_g$ is equivalent to the property that $g$ is a point of Gateaux smoothness:
 $$
 \lim_{u\to 0}(\|g+uy\|+\|g-uy\|-2\|g\|)/u =0
 $$
 for any $y\in X$. In particular, if $X$ is uniformly smooth then $F_f$ is unique for any $f\neq 0$. We considered in \cite{T23} the following greedy algorithm which generalizes the Weak Orthogonal Greedy Algorithm to a Banach space setting.

 {\bf Weak Quasi-Orthogonal Greedy Algorithm (WQOGA).}
 Let $t\in (0,1]$ and $f_0$ be given.   Find $\varphi_1:=\varphi_1^{q,t}\in\D$ (here and below index $q$ stands for {\it quasi-orthogonal}) such that
 $$
 |F_{\varphi_1}(f_0)| \ge t\sup_{g\in \D}|F_g(f_0)|.
 $$
 Next, we find $c_1$ satisfying
 $$
 F_{\varphi_1}(f-c_1\varphi_1)=0.
 $$
 Denote $f_1:=f_1^{q,t}:=f-c_1\varphi_1$.

 We continue this construction in an inductive way. Assume that we have already constructed residuals $f_0,f_1,\dots,f_{m-1}$ and dictionary elements $\varphi_1,\dots,\varphi_{m-1}$. Now, we pick an element
 $\varphi_m:=\varphi_m^{q,t}\in\D$ such that
 $$
 |F_{\varphi_m}(f_{m-1})| \ge t\sup_{g\in \D}|F_g(f_{m-1})|.
 $$
 Next, we look for $c_1^m,\dots,c_m^m$ satisfying
 \begin{equation}\label{1.1}
 F_{\varphi_j}(f-\sum_{i=1}^mc_i^m\varphi_i)=0,\quad j=1,\dots,m.
 \end{equation}
If there is no solution to (\ref{1.1}) then we stop, otherwise we denote $G_m:=G_m^{q,t} := \sum_{i=1}^mc_i^m\varphi_i$ and $f_m:=f_m^{q,t}:=f-G_m $ with $c_1^m,\dots,c_m^m$ satisfying (\ref{1.1}).

\begin{Remark}\label{R1.1} Note that (\ref{1.1}) has a unique solution if
  $\det (F_{\varphi_j}(\varphi_i))_{i,j=1}^m \neq 0$. Applying the WQOGA in the case of a dictionary with the coherence parameter $M:=M(\D)$ gives, by a simple well known argument on the linear independence of the rows of the matrix $(F_{\varphi_j}(\varphi_i))_{i,j=1}^m$, the conclusion  that (\ref{1.1}) has a unique solution for any $m<1+1/M$.
 Thus, in the case of an $M$-coherent
dictionary $\D$, we can run the WQOGA for at least $[1/M]$ iterations.
\end{Remark}

In the case $t=1$ we call the WQOGA the Quasi-Orthogonal Greedy Algorithm (QOGA). In the case of QOGA we need to make an extra assumption that the corresponding maximizer $\ff_m\in\D$ exists. Clearly, it is the case when $\D$ is finite.

It was proved in \cite{T23} (see also \cite{Tbook}, p. 382) that the WQOGA is as good as the WOMP in the sense of exact recovery of sparse signals with respect to incoherent dictionaries. The following result was obtained in \cite{T23}.
 \begin{Theorem}\label{T1.2} Let $t\in (0,1]$. Assume that $\D$ has coherence parameter $M$. Let $K<\frac{t}{1+t}(1+1/M)$. Then for any
 $f_0$ of the form
 \begin{equation*}
 f_0=\sum_{i=1}^Ka_ig_i,
 \end{equation*}
 where $g_i$ are distinct elements of $\D$, the WQOGA recovers it exactly after $K$ iterations. In other words, $f^{q,t}_K=0$.
 \end{Theorem}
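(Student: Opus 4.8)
The plan is to show by induction on $m$ that for each $m=1,\dots,K$ the $m$-th step of the WQOGA selects a generator $\varphi_m\in\{g_1,\dots,g_K\}$ of $f_0$, distinct from all previously chosen ones (as long as $f_{m-1}\neq 0$; if the residual already vanishes, recovery has occurred and the residual stays zero). Granting this, after $K$ steps all of $g_1,\dots,g_K$ have been selected, and, reindexing so that $\varphi_i=g_i$, the quasi-orthogonality relations (\ref{1.1}) become $\sum_{i=1}^K d_iF_{g_j}(g_i)=0$ for $j=1,\dots,K$, where $d_i:=a_i-c_i^K$. Since $K<1+1/M$, the coherence matrix $(F_{g_j}(g_i))_{i,j=1}^K$ is invertible by the diagonal-dominance argument of Remark \ref{R1.1}, so every $d_i=0$ and hence $f_K^{q,t}=0$.

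For the inductive step, assume $\varphi_1,\dots,\varphi_{m-1}$ are $m-1$ distinct generators. Because each residual is obtained from $f_0$ by subtracting a combination of already selected generators, $f_{m-1}$ stays supported on $\{g_1,\dots,g_K\}$, say $f_{m-1}=\sum_{i=1}^K b_ig_i$, and (\ref{1.1}) gives $F_{\varphi_j}(f_{m-1})=0$ for every selected $\varphi_j$. I would then run the two standard coherence bounds. Let $i^\ast$ maximize $|b_i|$, so that $|b_{i^\ast}|\ge\|b\|_1/K$ with $\|b\|_1:=\sum_i|b_i|$. Using $F_{g_{i^\ast}}(g_{i^\ast})=1$ and $|F_{g_{i^\ast}}(g_i)|\le M$ for $i\neq i^\ast$ one gets
$$
|F_{g_{i^\ast}}(f_{m-1})|\ge(1+M)|b_{i^\ast}|-M\|b\|_1\ge\|b\|_1\Bigl(\tfrac{1+M}{K}-M\Bigr),
$$
while for any $g\in\D$ outside $\{g_1,\dots,g_K\}$ the triangle inequality yields $|F_g(f_{m-1})|\le M\|b\|_1$.

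The crux is to convert these bounds into the statement that the weak criterion can only be met by a correct generator. Since $g_{i^\ast}\in\D$ we have $\sup_{g\in\D}|F_g(f_{m-1})|\ge|F_{g_{i^\ast}}(f_{m-1})|$, so it suffices that every incorrect $g$ obey $|F_g(f_{m-1})|<t\,|F_{g_{i^\ast}}(f_{m-1})|$. Combining the two displayed estimates, this reduces to the numerical inequality $M<t\bigl(\tfrac{1+M}{K}-M\bigr)$, which rearranges to precisely $K<\tfrac{t}{1+t}(1+1/M)$, the hypothesis of the theorem. Hence no $g\notin\{g_i\}$ can satisfy $|F_g(f_{m-1})|\ge t\sup_{h}|F_h(f_{m-1})|$, forcing $\varphi_m$ to be a generator; and $\varphi_m$ is new, since any already chosen $\varphi_j$ has $F_{\varphi_j}(f_{m-1})=0$, which is strictly below the positive threshold $t\sup_h|F_h(f_{m-1})|$. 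This closes the induction.

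I expect the main difficulty to lie in two places. First, one must confirm that the residual genuinely remains supported on $\{g_1,\dots,g_K\}$ and that the maximizing index $i^\ast$ behaves as a support atom with large functional value: quasi-orthogonality only bounds the already-exhausted coefficients by $|b_j|\le M\|b\|_1$, so one checks that $K<1+1/M$ prevents the dominant coefficient from hiding among them (indeed the positivity of $\tfrac{1+M}{K}-M$ forces $F_{g_{i^\ast}}(f_{m-1})\neq0$, so $g_{i^\ast}$ cannot be an exhausted atom). Second, and more delicately, the chain of inequalities must land exactly on $M<t\bigl(\tfrac{1+M}{K}-M\bigr)$ so that the threshold $\tfrac{t}{1+t}(1+1/M)$ emerges sharp rather than with a degraded constant; keeping the weakness factor $t$ in the right position throughout is where the care is required.
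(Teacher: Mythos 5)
Your proof is correct, and it is the standard coherence-based recovery argument: the residual stays supported on $\{g_1,\dots,g_K\}$, the two bounds $|F_{g_{i^\ast}}(f_{m-1})|\ge \|b\|_1\bigl(\tfrac{1+M}{K}-M\bigr)$ and $|F_g(f_{m-1})|\le M\|b\|_1$ for $g$ outside the support, the equivalence $M<t\bigl(\tfrac{1+M}{K}-M\bigr)\Leftrightarrow K<\tfrac{t}{1+t}(1+1/M)$, and diagonal dominance of $(F_{g_j}(g_i))$ to kill the residual after the $K$th projection step. Note that this paper does not actually prove Theorem \ref{T1.2} (it imports it from the earlier work where the WQOGA was introduced), but your argument is precisely the technique that result rests on, with the weakness parameter $t$ placed correctly so the constant $\tfrac{t}{1+t}(1+1/M)$ comes out sharp.
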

It is known (see  \cite{Tbook}, pp. 303--305) that the bound $K<\frac{1}{2}(1+1/M)$ is sharp for exact recovery by the OGA.

We introduce a new norm, associated with a dictionary $\D$,  by the formula
$$
\|f\|_\D:=\sup_{g\in\D}|F_g(f)|,\quad f\in X.
$$
  We define best $m$-term approximation in the norm $Y$ as follows
$$
\sigma_m(f)_Y := \inf_{g\in\Sigma_m(\D)}\|f-g\|_Y.
$$
In \cite{ST} the norm $Y$ was either the norm $X$ of our Banach space or the norm $\|\cdot\|_\D$ defined above.
 The following two Lebesgue-type inequalities were proved in \cite{ST}.
 \begin{Theorem}\label{T1.3} Assume that $\D$ is an $M$-coherent dictionary. Then for \newline $m\le \frac{1}{3M}$ we have for the QOGA
 \begin{equation}\label{1.3}
 \|f_m\|_\D \le 13.5\sigma_m(f)_\D.
 \end{equation}
 \end{Theorem}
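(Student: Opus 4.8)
The plan is to compare the QOGA residual $f_m$ with a near-best $m$-term approximant, exploiting two structural features of the QOGA in the $\|\cdot\|_\D$ norm: the exact greedy maximality $\|f_{k-1}\|_\D = |F_{\varphi_k}(f_{k-1})|$ (valid since $t=1$, because $|F_{\varphi_k}(f_{k-1})|\le\sup_{g}|F_g(f_{k-1})|=\|f_{k-1}\|_\D$ always), and the quasi-orthogonality $F_{\varphi_j}(f_m)=0$ for $j\le m$ coming from $(\ref{1.1})$. Fix $p=\sum_{i\in S}b_ig_i$ with $|S|\le m$ and $\|f-p\|_\D\le\sigma$, where $\sigma:=\sigma_m(f)_\D$ (take $p$ within $\epsilon$ of the infimum and let $\epsilon\to0$ at the end). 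Throughout I would use only the coherence bound $|F_g(h)|\le M$ for distinct $g,h\in\D$; since $m\le 1/(3M)$, every relevant set $V$ of at most $2m$ atoms satisfies $(|V|-1)M<2/3$, so the matrices $(F_{e_j}(e_i))_{i,j\in V}$ are strictly diagonally dominant, hence invertible, with the Neumann/Gershgorin bound $(1-(|V|-1)M)^{-1}\le 3$.

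The first reduction is a coherence-based equivalence between the $\D$-norm and the $\ell_\infty$ norm of coefficients: for $h=\sum_{i\in V}d_ie_i$ with $|V|\le 2m$ one gets, testing against $e_k\in V$, the lower bound $\|h\|_\D\ge |d_k|-M\sum_{i\ne k}|d_i|$, and testing against an arbitrary $g$ the upper bound $\|h\|_\D\le (1+(|V|-1)M)\|d\|_\infty$; together these give $\|h\|_\D\asymp\|d\|_\infty$ with constants pinned down by $1\pm(2m-1)M$, i.e.\ by the interval $(1/3,5/3)$. Applying this to $h=p-G_m$, supported on $S\cup\{\varphi_j\}_{j\le m}$, and using $\|f_m\|_\D\le\sigma+\|p-G_m\|_\D$, I reduce the theorem to bounding the coefficient vector of $p-G_m$.

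The engine of the proof is a robust version of the exact-recovery statement of Theorem \ref{T1.2}. The quasi-orthogonality gives $F_{\varphi_j}(p-G_m)=F_{\varphi_j}(p)-F_{\varphi_j}(f)=-F_{\varphi_j}(f-p)$, hence $|F_{\varphi_j}(p-G_m)|\le\sigma$ for $j\le m$, so by the diagonal-dominance inversion the coefficients of $p-G_m$ on the selected atoms are $O(\sigma)$. The difficulty is that on the unselected part of $S$ the coefficients of $p-G_m$ equal the optimal coefficients $b_i$ themselves, which are \emph{not} a priori small; the estimate can therefore succeed only if the QOGA has in fact selected every atom of $S$ carrying a coefficient large compared with $\sigma$. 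I would establish this by induction on the iteration count, mirroring the argument behind Theorem \ref{T1.2}: at step $k$, greedy maximality gives $\|f_{k-1}\|_\D\ge |F_{g_i}(f_{k-1})|$ for each still-unselected $g_i$, $i\in S$, and expanding $f_{k-1}=f-G_{k-1}$ through the coherence bound shows that whenever some uncaptured $|b_i|$ exceeds a fixed multiple of $\sigma$, the maximizer $\varphi_k$ must again lie in $S$. After $m$ steps this forces every ``large'' atom of $S$ among the $\{\varphi_j\}_{j\le m}$, so the residual coefficients on the unselected part of $S$ are themselves $O(\sigma)$.

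\textbf{Main obstacle.} The delicate point is precisely this robust support identification: decoupling the ``large'' from the ``small'' coefficients of $p$ relative to the a priori unknown error level $\sigma$, while the residual $f_{k-1}$ and hence the greedy choice change at every step, using only the single scalar bound $M(\D)$ together with the maximality identity. Crude estimates here would yield a large absolute constant; the sharp value $13.5$ should come from optimizing the threshold that separates the large and small coefficients and from carefully combining the two matrix-inversion factors $(1-(2m-1)M)^{-1}$ with the $\pm\sigma$ perturbations, invoking $(2m-1)M<2/3$ only at the very end.
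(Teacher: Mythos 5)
You should first note a mismatch with the assignment: the paper never proves Theorem \ref{T1.3}. It is stated in the Discussion section as a result imported from \cite{ST}, so there is no in-text proof to compare your argument against; I can only judge your proposal on its own merits. Judged that way, your skeleton is sound and it is the natural route: the reduction $\|f_m\|_\D\le\|f-p\|_\D+\|p-G_m\|_\D$, the two-sided comparison of $\|\cdot\|_\D$ with the $\ell_\infty$ norm of coefficients on supports of size at most $2m$ (constants $1\pm(2m-1)M$, hence in $(1/3,5/3)$ since $(2m-1)M<2mM\le 2/3$), and the identity $F_{\varphi_j}(p-G_m)=-F_{\varphi_j}(f-p)$ coming from (\ref{1.1}) are all correct, and you have located the real difficulty correctly.

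The gap is that the ``engine'' is asserted rather than proved, and it conceals a structural subtlety your sketch does not address. Write $S=\supp p$, $\Lambda_k=\{\varphi_1,\dots,\varphi_k\}$, $B_k:=\max_{i\in S\setminus\Lambda_k}|b_i|$, and let $d$ be the coefficient vector of $p-G_{k-1}$ on $S\cup\Lambda_{k-1}$. The comparison that forces $\varphi_k\in S$ does \emph{not} hold on supports of size $2m$: there the best in-support atom is only guaranteed $|F_{e_{i^*}}(f_{k-1})|\ge \tfrac13 B_{k-1}-\sigma$, while an out-of-support atom can reach $M\,|S\cup\Lambda_{k-1}|\,\|d\|_\infty+\sigma\le\tfrac23 B_{k-1}+\sigma$, so nothing follows. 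The selection step closes only under the inductive hypothesis that \emph{all} previous picks lie in $S$, so that $|S\cup\Lambda_{k-1}|\le m$: then quasi-orthogonality gives $\|d\|_\infty\le\max(B_{k-1},\tfrac32\sigma)=B_{k-1}$, and $|F_{e_{i^*}}(f_{k-1})|\ge B_{k-1}(1-(m-1)M)-\sigma>B_{k-1}mM+\sigma\ge\sup_{g\notin S}|F_g(f_{k-1})|$ exactly when $B_{k-1}(1-(2m-1)M)>2\sigma$, which holds whenever $B_{k-1}>6\sigma$. What makes the induction legitimate is the observation, absent from your sketch, that $B_k$ is non-increasing in $k$: if $B_{k-1}>6\sigma$ then every earlier $B_{j-1}>6\sigma$ as well, so inductively every earlier pick was indeed in $S$ and was new (its functional value exceeds $\sigma\ge 0$ while functionals vanish on $\Lambda_{k-1}$). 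Consequently $B_m\le 6\sigma$ always, and your endgame then works on the full support of size $\le 2m$: selected coefficients satisfy $|d_j|\le\sigma+\tfrac23\|d\|_\infty$, unselected ones are $\le 6\sigma$, hence $\|d\|_\infty\le 6\sigma$, $\|p-G_m\|_\D\le(1+\tfrac23)\cdot 6\sigma=10\sigma$, and $\|f_m\|_\D\le 11\sigma\le 13.5\,\sigma_m(f)_\D$ after letting the near-minimizer tend to the infimum. So your plan does complete to a proof, with room to spare in the constant; but as submitted it defers precisely the inductive lemma that \emph{is} the proof, and it never establishes the stated constant $13.5$ --- a point your own closing paragraph concedes.
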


 \begin{Theorem}\label{T1.4} Assume that $\D$ is an $M$-coherent dictionary in a Banach space $X$. There exists an absolute constant $C$   such that, for $m\le 1/(3M)$, we have for the QOGA
$$
\|f_m\|_X \le C \inf_{g\in\Sigma_m(\D)}(\|f-g\|_X + m\|f-g\|_\D).
$$
\end{Theorem}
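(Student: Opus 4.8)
The plan is to reduce the $X$-norm estimate to the already-established $\D$-norm Lebesgue inequality of Theorem \ref{T1.3} by controlling the sparse ``difference'' element $g-G_m$. Fix any $g=\sum_{i\in A}b_ig_i\in\Sigma_m(\D)$ with $|A|\le m$. Since $m\le 1/(3M)<1+1/M$, Remark \ref{R1.1} guarantees that the QOGA runs for $m$ iterations and produces $G_m=\sum_{j=1}^m c_j^m\varphi_j$, so that $f_m=f-G_m$. Writing $h:=g-G_m=f_m-(f-g)$, I note that $h$ is supported on at most $2m$ distinct dictionary elements (those of $A$ together with $\varphi_1,\dots,\varphi_m$). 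The strategy is: (i) bound $\|h\|_X$ by $\|h\|_\D$ using coherence; (ii) bound $\|h\|_\D$ via Theorem \ref{T1.3}; then (iii) use the triangle inequality $\|f_m\|_X\le\|f-g\|_X+\|h\|_X$ and take the infimum over $g$.

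For step (i) I would prove a coherence estimate: if $h=\sum_{i\in B}d_ig_i$ with $|B|\le 2m$ and $m\le 1/(3M)$, then $\|h\|_X\le 6m\|h\|_\D$. For each $j\in B$, expanding $F_{g_j}(h)=d_j+\sum_{i\in B\setminus\{j\}}d_iF_{g_j}(g_i)$ and using $|F_{g_j}(g_i)|\le M$ together with $|F_{g_j}(h)|\le\|h\|_\D$ gives $|d_j|\le\|h\|_\D+M\|d\|_1$. Summing over the $|B|\le 2m$ indices yields $\|d\|_1\le 2m\|h\|_\D+2mM\|d\|_1$, and since $2mM\le 2/3$ we obtain $\|d\|_1\le 6m\|h\|_\D$. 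Because the dictionary is normalized, $\|h\|_X\le\sum_{i\in B}|d_i|=\|d\|_1\le 6m\|h\|_\D$.

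For step (ii), from $h=f_m-(f-g)$ I get $\|h\|_\D\le\|f_m\|_\D+\|f-g\|_\D$, and Theorem \ref{T1.3} gives $\|f_m\|_\D\le 13.5\,\sigma_m(f)_\D\le 13.5\|f-g\|_\D$, the last step because $g\in\Sigma_m(\D)$. Hence $\|h\|_\D\le 14.5\|f-g\|_\D$, and combining with step (i), $\|h\|_X\le 87\,m\|f-g\|_\D$. Step (iii) then gives $\|f_m\|_X\le\|f-g\|_X+87\,m\|f-g\|_\D$, and taking the infimum over $g\in\Sigma_m(\D)$ furnishes the claim with absolute constant $C=87$.

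The main obstacle I anticipate is the correct handling of the norming functionals in step (i): the coherence parameter $M(\D)$ is defined with a supremum over \emph{all} norming functionals of $g$, while $\|h\|_\D$ is a supremum over $\D$ with a (possibly fixed) choice of $F_g$. One must ensure that whatever assignment $g\mapsto F_g$ underlies $\|\cdot\|_\D$ satisfies both $|F_{g_j}(h)|\le\|h\|_\D$ and $|F_{g_j}(g_i)|\le M$ simultaneously; this is automatic, since $M$ bounds \emph{every} norming functional and the first inequality holds for the functional realizing the term in the defining supremum. A minor additional point is verifying that the support of $h$ has size at most $2m$, which is immediate from $|A|\le m$ and the $m$ indices selected by the algorithm.
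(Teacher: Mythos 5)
The paper contains no proof of Theorem \ref{T1.4} at all: it is stated as a known result imported from \cite{ST}, so there is no in-paper argument to compare yours against. Judged on its own, your proof is correct and complete given Theorem \ref{T1.3}, which the paper states just before and which you may legitimately invoke. The three steps fit together as claimed: (i) the coherence estimate is sound, since for $h=\sum_{i\in B}d_ig_i$ with distinct elements and $|B|\le 2m$ one gets $|d_j|\le \|h\|_\D+M\|d\|_1$ from $F_{g_j}(g_j)=1$ and $|F_{g_j}(g_i)|\le M$, and $2mM\le 2/3$ closes the bootstrap to give $\|h\|_X\le \|d\|_1\le 6m\|h\|_\D$; (ii) Theorem \ref{T1.3} and the triangle inequality give $\|h\|_\D\le 14.5\,\|f-g\|_\D$; (iii) $f_m=(f-g)+h$ yields the claim with $C=87$. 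Your treatment of the norming-functional subtlety is also right: $M(\D)$ is defined as a supremum over all norming functionals, so whatever fixed assignment $g\mapsto F_g$ underlies $\|\cdot\|_\D$ satisfies both inequalities you need simultaneously. Two minor points you implicitly rely on and correctly flag: Remark \ref{R1.1} guarantees the QOGA is well defined for $m\le 1/(3M)<1+1/M$ iterations, and elements of $A$ coinciding with selected $\varphi_j$'s are absorbed by combining coefficients, so $|B|\le 2m$ with distinct elements. This reduction --- a $\D$-norm Lebesgue inequality plus the sparse-element norm comparison $\|h\|_X\le Cm\|h\|_\D$ --- is the natural route to the $X$-norm statement, and it recovers exactly the structure of the bound $\|f-g\|_X+m\|f-g\|_\D$ in the theorem.
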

\begin{Corollary}\label{C1.1} Using the inequality $\|g\|_\D \le \|g\|_X$,  Theorem \ref{T1.4} obtains
$$
\|f_m\|_X \le C(1+m) \sigma_m(f)_X.
$$
\end{Corollary}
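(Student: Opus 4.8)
The plan is to obtain Corollary \ref{C1.1} directly from Theorem \ref{T1.4} by exploiting the elementary comparison between the two norms $\|\cdot\|_\D$ and $\|\cdot\|_X$. The starting point is the mixed-norm bound already furnished by Theorem \ref{T1.4}, namely
$$
\|f_m\|_X \le C \inf_{g\in\Sigma_m(\D)}(\|f-g\|_X + m\|f-g\|_\D),
$$
valid for $m\le 1/(3M)$. The only additional ingredient I need is the inequality $\|h\|_\D \le \|h\|_X$ for every $h\in X$, after which the corollary follows by pure bookkeeping.

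First I would verify this norm comparison. By the definition $\|h\|_\D = \sup_{g\in\D}|F_g(h)|$, and since every norming functional satisfies $\|F_g\|_{X^*}=1$, one has $|F_g(h)| \le \|F_g\|_{X^*}\|h\|_X = \|h\|_X$ for each $g\in\D$; taking the supremum over $g\in\D$ gives $\|h\|_\D\le \|h\|_X$. Applying this with $h=f-g$ for a fixed $g\in\Sigma_m(\D)$, I would bound the bracket in Theorem \ref{T1.4} by
$$
\|f-g\|_X + m\|f-g\|_\D \le (1+m)\|f-g\|_X.
$$
Taking the infimum over $g\in\Sigma_m(\D)$ and recalling $\sigma_m(f)_X = \inf_{g\in\Sigma_m(\D)}\|f-g\|_X$, I then obtain
$$
\inf_{g\in\Sigma_m(\D)}(\|f-g\|_X + m\|f-g\|_\D) \le (1+m)\sigma_m(f)_X,
$$
and combining this with the estimate of Theorem \ref{T1.4} yields $\|f_m\|_X \le C(1+m)\sigma_m(f)_X$, which is exactly the claim.

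There is essentially no obstacle here: all the analytic content is carried by Theorem \ref{T1.4}, and the corollary merely converts the mixed $(\|\cdot\|_X,\|\cdot\|_\D)$-estimate into a single $\|\cdot\|_X$-estimate at the cost of the factor $(1+m)$. The one point deserving a moment's attention is that the norm comparison $\|\cdot\|_\D\le\|\cdot\|_X$ holds uniformly over the choice of $g\in\D$ entering the supremum defining $\|\cdot\|_\D$, so that it passes cleanly through both the supremum in that definition and the subsequent infimum over $\Sigma_m(\D)$.
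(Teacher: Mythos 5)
Your proposal is correct and is exactly the argument the paper intends: the corollary follows from Theorem \ref{T1.4} by applying the norm comparison $\|h\|_\D=\sup_{g\in\D}|F_g(h)|\le\|F_g\|_{X^*}\|h\|_X=\|h\|_X$ to $h=f-g$, factoring out $(1+m)$, and taking the infimum over $g\in\Sigma_m(\D)$. Your only addition is that you verify the norm inequality explicitly (via $\|F_g\|_{X^*}=1$), which the paper states without proof.
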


Inequality (\ref{1.3}) is a perfect (up to a constant 13.5) Lebesgue-type inequality. It indicates that the norm $\|\cdot\|_\D$ used in \cite{ST} is a suitable norm for analyzing performance of the QOGA. Corollary \ref{C1.1} shows that the Lebesgue-type inequality (\ref{1.3}) in the norm $\|\cdot\|_\D$ implies the Lebesgue-type inequality in the norm $\|\cdot\|_X$.

Thus, results of this paper complement the above discussed results from \cite{T23} and \cite{ST}. Results from \cite{T23} and \cite{ST} deal with incoherent dictionaries and use the QOGA for exact recovery and the Lebesgue-type inequalities. Results of this paper deal with dictionaries which satisfy assumptions {\bf A1} and {\bf A2} and we analyze the WCGA here. In the case of a Hilbert space, assumptions {\bf A1} and {\bf A2} are satisfied if $\D$ has RIP. It is well known that the RIP condition is much weaker than the incoherence condition in the case of a Hilbert space. It is interesting to note that we do not know how the coherence parameter $M(\D)$ is related to properties {\bf A1} and {\bf A2} in the case of a Banach space.

We now give a few applications of Theorem \ref{T1.5} for specific dictionaries $\D$. We begin with the case when $\D$ is a basis $\Psi$ for $X$. In some of our examples we take $X=L_p$, $2\le p<\infty$. Then it is known that $\rho(u) \le \gamma u^2$ with $\gamma = (p-1)/2$.

{\bf Example 1.} Let $X$ be a Banach space with $\rho(u)\le \gamma u^2$ and with cotype $q$.
Let $\Psi$ be a normalized in $X$ unconditional basis for $X$. Then $U\le C(X,\Psi)$. By Remark \ref{R3.1} $\Psi$ satisfies {\bf A1} with $r=1-\frac{1}{q}$. Theorem \ref{T1.5} gives
\begin{equation}\label{4.3}
\|f_{C(t,X,\Psi)K^{2-2/q}}\| \le C\sigma_K(f_0,\Psi) .
\end{equation}
We note that (\ref{4.3}) provides some progress in Open Problem 7.1 (p. 91) from \cite{Tsurv}.

{\bf Example 2.} Let $\Psi$ be a uniformly bounded orthogonal system normalized in $L_p(\Omega)$, $2\le p<\infty$, $\Omega$ is a bounded domain. Then we can take $r=1/2$. The inequality
$$
\|g\|_p \le CK^{1/2-1/p}\|g\|_2
$$
for $K$-sparse $g$ implies that
$$
\|S_A(f)\|_p \le CK^{1/2-1/p}\|S_A(f)\|_2 \le CK^{1/2-1/p}\|f\|_2 \le CK^{1/2-1/p}\|f\|_p.
$$
Therefore $U\le CK^{1/2-1/p}$. Theorem \ref{T1.5} gives
 \begin{equation}\label{4.4}
\|f_{C(t,p,D)K^{2/p'}\ln K}\|_p \le C\sigma_K(f_0,\Psi)_p .
\end{equation}
Inequality (\ref{4.4}) provides some progress in Open Problem 7.2 (p. 91) from \cite{Tsurv}.

Theorem \ref{T1.5} can also be applied for quasi-greedy bases and other greedy-type bases (see \cite{Tbook}).
We plan to discuss these applications in detail in our future work.

In this paper we limit ourselves to the case of Banach spaces satisfying the condition $\rho(u)\le \gamma u^2$.
In particular, as we mentioned above the $L_p$ spaces with $2\le p<\infty$ satisfy this condition. Clearly, the $L_p$ spaces with $1<p\le 2$ are also of interest. For the clarity of presentation we do not discuss the case $\rho(u)\le \gamma u^q$ in this paper.
The technique from Section 3 works in this case too and we will present the corresponding results in our future work.

 \newpage

\end{document}